\documentclass[10pt]{amsart}

\usepackage{amsmath,amssymb,amsthm,mathtools}
\usepackage[T1]{fontenc}
\usepackage{lmodern}
\usepackage[margin=1in]{geometry}
\usepackage{enumitem}
\usepackage{microtype}
\usepackage[hidelinks]{hyperref}
\usepackage{graphicx}
\usepackage{color}
\definecolor{wine}{rgb}{0.5,0,0.2}
\usepackage{hyperref}
\hypersetup{colorlinks=true, linkcolor=blue, citecolor=magenta, urlcolor=wine}
\usepackage{url}
\usepackage{fancyhdr}
\usepackage{booktabs}
\usepackage{tabularx}

\newtheorem{theorem}{Theorem}[section]
\newtheorem{lemma}[theorem]{Lemma}
\newtheorem{corollary}[theorem]{Corollary}
\newtheorem{prop}[theorem]{Proposition}
\newcommand{\mathsc}[1]{\textnormal{\textsc{#1}}}
\makeatletter
\def\th@claimsc{%
    \th@plain
    \thm@headfont{\normalfont}%
    \def\thmhead##1##2##3{%
        \mathsc{\thmhead@plain{##1}{##2}{##3}}}%
}
\makeatother
\theoremstyle{claimsc}
\newtheorem{claim}{Claim}

\theoremstyle{definition}
\newtheorem{example}[theorem]{Example}
\newtheorem{question}[theorem]{Question}

\newcommand{\nn}{\mathbb{N}}
\newcommand{\zz}{\mathbb{Z}}
\newcommand{\qq}{\mathbb{Q}}
\newcommand{\rr}{\mathbb{R}}
\newcommand{\gp}{\mathcal{G}}
\newcommand{\A}{\mathcal{A}}
\newcommand{\F}{\mathsf{F}}
\DeclareMathOperator{\supp}{supp}
\DeclareMathOperator{\cone}{cone}

\keywords{cyclic semidomain, ideals}

\subjclass[2010]{Primary: 13A05, 13F15; Secondary: 13A15, 13G05}

\begin{document}
	
\mbox{}

\title{The Bi-LF Positive Conjecture holds}

\author{Felix Gotti}
\address{Department of Mathematics\\MIT\\Cambridge, MA 02139}
\email{fgotti@mit.edu}

\author{Omar Graia}
\address{CrowdMath\\Cambridge, MA 02139}
\email{omar.graia2@gmail.com}

\author{Neil Panse}
\address{CrowdMath\\Cambridge, MA 02139}
\email{neilpanse@gmail.com}

\date{\today}

\begin{abstract}
	We prove that a semidomain has length-factorial additive and multiplicative monoids if and only if it is isomorphic to the semiring of nonnegative integers or is a unique factorization domain. Consequently, bi-length-factoriality and bi-factoriality coincide for all semidomains, settling the Bi-LF Positive Conjecture. The main step shows that additive length-factoriality and multiplicative factoriality force additive factoriality, without an additive reducedness assumption. Multiplicative factoriality forces every semidomain with a nonzero additive unit to be a unique factorization domain. In the additively reduced case, a uniform multiplier for lattice points in a finitely generated cone yields cancellation of divisibility across sums, which leads to an obstruction to additive atoms dividing positive integers and completes the structural argument. We also consider related questions on quasi-factoriality and classify the rational and transcendental monogenic positive semirings whose multiplicative monoids are quasi-$n$-factorial for a prescribed integer $n \ge 2$.
\end{abstract}

\bigskip
\maketitle

\bigskip
%%%%%%%%%%%%%%%%%%%%%%
%%%%%%%%%%%%%%%%%%%%%%
\section{Introduction}
\label{sec:intro}

We let $\nn$, $\zz$, $\qq$, and $\rr$ denote the sets of positive integers, integers, rational numbers, and real numbers, respectively, and set $\nn_0 := \nn \cup \{0\}$. Throughout this paper, monoids are commutative and cancellative.
\smallskip

An atomic monoid is \emph{factorial} if each nonunit has a unique factorization into atoms, up to order and associates. It is \emph{length-factorial} if distinct factorizations of the same element have distinct lengths. Thus, factoriality implies length-factoriality. We collect the factorization conventions and other preliminaries in Section~\ref{sec:background}.
\smallskip

Length-factoriality was introduced by Coykendall and Smith in their 2011 paper \emph{On unique factorization domains}~\cite{CS11} under the name \emph{other-half-factoriality}. It complements half-factoriality: an atomic monoid is \emph{half-factorial} if all factorizations of each element have the same length, whereas length-factoriality requires factorizations of the same element and the same length to coincide up to order and associates. Thus, a monoid is factorial if and only if it is both half-factorial and length-factorial. Coykendall and Smith proved that an integral domain is other-half-factorial if and only if it is a unique factorization domain~\cite[Corollary 2.11]{CS11}; atomicity is included in their definition of other-half-factoriality. They also exhibited monoids that are length-factorial but not factorial~\cite[Example 2.13]{CS11}, showing that this characterization does not extend to arbitrary monoids. The term \emph{length-factoriality} was adopted by Chapman, Coykendall, Gotti, and Smith in their 2021 paper \emph{Length-factoriality in commutative monoids and integral domains}~\cite{CCGS21}, which developed the theory in these settings and explained the terminology in~\cite[Remark 2.2]{CCGS21}.
\smallskip

A \emph{semidomain} in this paper is a subsemiring of a field containing $0$ and $1$. It is \emph{positive} if it is isomorphic to a subsemiring of $\rr_{\ge 0}$ containing $1$. For a semidomain $S$, we write $S^* := S \setminus \{0\}$ for its multiplicative monoid.
\smallskip

Baeth, Chapman, and Gotti~\cite{BCG21} initiated the simultaneous study of factorization in the additive and multiplicative monoids of positive semirings. Their Bi-UF Positive Conjecture~\cite[Conjecture 7.7]{BCG21} asserts that $\nn_0$ is the only positive semiring whose two monoids are factorial. The conjecture was verified in several classes. Deng, Gotti, and Zeng~\cite[Corollary 5.4]{DGZ26} proved the stronger bi-half-factorial assertion for positive rational monogenic semidomains. Gotti, Graia, Han, and Liang~\cite[Theorem 3.6 and Corollary 3.7]{GGHL26} established the quadratic monogenic case, and Bilakanti, Gotti, Kandasamy, Liang, Liu, Polo, Yang, and Yao~\cite[Theorem 5.7]{BGKLLPYY26} established the case of semirings generated by finitely many positive algebraic numbers.
\smallskip

Graia~\cite[Theorem 4.3]{oG26} proved the full Bi-UF Positive Conjecture. Panse~\cite{nP26} gives another proof and establishes the corresponding classification for additively reduced semidomains, without requiring positivity. Here \emph{additively reduced} means that $0$ is the only unit of the additive monoid. Related work of Gonzalez, Polo, and Rodriguez~\cite{GPR24} studies localization of unique factorization semidomains and provides an example outside the positive setting whose additive and multiplicative monoids are both half-factorial~\cite[Example 4.6]{GPR24}.
\smallskip

Bu, Vulakh, and Zhao extended the Coykendall--Smith characterization to semidomains: the multiplicative monoid of a semidomain is length-factorial if and only if it is factorial~\cite[Corollary 5.4]{BVZ23}. In the same paper, they formulated the \emph{Bi-LF Positive Conjecture}, asserting that a positive semidomain is bi-length-factorial if and only if it is bi-factorial~\cite[Conjecture 5.6]{BVZ23}. Here the prefix \emph{bi-} means that the property holds for both the additive and multiplicative monoids. The related question of whether $\nn_0$ is the only positive bi-length-factorial semidomain had been posed earlier by Baeth, Chapman, and Gotti~\cite[Question 7.8(2)]{BCG21} and was recalled by Bu, Vulakh, and Zhao~\cite[Question 5.5]{BVZ23}. In view of the Bi-UF Positive Theorem, the conjecture of Bu, Vulakh, and Zhao is equivalent to an affirmative answer to this question. We establish both assertions as consequences of the following classification of all bi-length-factorial semidomains.
\smallskip

\begin{theorem}[Classification of bi-length-factorial semidomains]\label{thm:bilf}
    Let $S$ be a semidomain. Then $(S, +)$ and $S^*$ are length-factorial if and only if $S$ is isomorphic to $\nn_0$ or is a unique factorization domain.
\end{theorem}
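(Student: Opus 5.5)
The converse direction is immediate. If $S\cong\nn_0$, then $(S,+)\cong\nn_0$ is free on the single atom $1$, and $S^*$ is the multiplicative monoid of positive integers, which is factorial. If $S$ is a unique factorization domain, then $(S,+)$ is a group: it has no nonunits, so it is trivially atomic and factorial. Also $S^*$ is factorial by definition. Factorial monoids are length-factorial, which settles this direction. For the forward direction, we first invoke the Bu--Vulakh--Zhao result \cite[Corollary 5.4]{BVZ23} to upgrade length-factoriality of $S^*$ to factoriality. The rest of the proof then splits according to whether $(S,+)$ is reduced.

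\emph{Case 1: $(S,+)$ has a nonzero unit $u$.} Then $-u\in S$, so $-1=(-u)u^{-1}\in \operatorname{Frac}(S)$. I claim that $-1$ actually lies in $S$. Since $S^*$ is factorial, write $u=\varepsilon p_1\cdots p_k$ with $\varepsilon\in U(S^*)$ and $p_i$ primes of $S^*$. The equation $u+(-u)=0$ shows that $-u\in S$. I would argue that $-\varepsilon\in S$: for each prime $p$ dividing $u$, factor $-u$ and compare with $u$ via $(-u)^2=u^2$. Unique factorization in $S^*$ gives $-u=\eta u$ with $\eta$ a unit of $S^*$, since $-u$ and $u$ have the same square and $S^*$ is factorial (hence root-closed on associate classes). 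Therefore $\eta=-1\in S$, so $S$ is a ring. Being a subring of a field, it is an integral domain whose multiplicative monoid is factorial, hence a UFD.

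\emph{Case 2: $(S,+)$ is reduced.} The goal is to show that $(S,+)$ is factorial, after which the additively reduced classification of Panse \cite{nP26} (bi-factorial additively reduced semidomains are isomorphic to $\nn_0$) finishes the proof. Length-factorial monoids that are not factorial have a rigid structure by \cite{CCGS21}: the factorization monoid has exactly one minimal relation. Concretely, there are disjoint nonempty finite sets of atoms with $\sum_{a\in A}m_a a=\sum_{b\in B}n_b b$, with $\sum m_a\ne\sum n_b$, and this generates all relations. Also, $\mathsf{Z}(S,+)$ sits inside a finitely generated cone picture. I would exploit the multiplicative structure as follows. Multiplying this relation by any $s\in S^*$ gives a relation among the elements $sa$. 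Each $sa$ is itself a sum of atoms, so the relation must be an $\nn$-multiple of the basic one, plus trivial terms.

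To make this rigorous, I would use a uniform multiplier: there is $N$ such that $N$ times any lattice point of the cone spanned by the relation is realized. This yields a cancellation law of the form ``$x\mid_+ y+z$ and ... implies ...''. From it I would deduce that no additive atom divides (additively) a positive integer $n\cdot 1$ unless it equals $1$. Since $1$ is an atom when $S$ is reduced and nontrivial, and $a\cdot 1=a$ but $n a$ relates to $n\cdot1$ via multiplication by $a$, this forces $A\cup B\subseteq\{1\}$, which contradicts the existence of the relation. Thus $(S,+)$ is factorial.

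The main obstacle is this last step: turning the single-relation structure into a contradiction using multiplication. I expect that handling the cone and the uniform multiplier carefully is where the real work lies.
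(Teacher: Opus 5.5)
Your converse direction, your handling of a nonzero additive unit (comparing $u^2=(-u)^2$ in the factorial monoid $S^*$ to get $-u=\eta u$, hence $\eta=-1\in S$), and your reduction of the additively reduced case to additive factoriality plus the reduced bi-factorial classification all agree with the paper (Lemmas~\ref{lem:ring-or-reduced} and~\ref{lem:biuf}). The gap is that the additively reduced case of the additive upgrade, which is the entire substance of the theorem, is not proved. You correctly observe that the relation group is cyclic, $K=\zz\rho$. After that you list intended ingredients without stating or deriving them. The closing inference ``this forces $A\cup B\subseteq\{1\}$'' is supported by nothing you have written. The integer fact you aim for, that no atom other than $1$ divides a positive integer \emph{additively}, is true but cheap. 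It follows from $K=\zz\rho$ and the ring structure by an elementary argument with no cone (Lemma~\ref{lem:basic}(\ref{item:basic-integers})), and by itself it yields no contradiction.

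What is missing is the following chain, or a substitute for it.

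\emph{Multiplicative cancellation.} You need: if $m+dy=dz$ with $m\ne0$, then $m/d\in S$. For a multiplicative atom $d=r$, apply primality of $r$ in $S^*$ to $(Nm+ry)^L$. This gives $y+Nc\in S$ for all $N$, where $c=m/r$. Next, take a uniform multiplier $D$ for lattice points of the cone generated by the finitely many atoms in $\supp(\rho)$, not a cone ``spanned by the relation.'' Applied to $c^j=\lim_N (y+Nc)^j/N^j$, it gives $Dc^j\in S$. Hence $r^j\mid_{S^*}Dm^j$ for every $j$, and so $r\mid_{S^*}m$. Your ``$x\mid_+ y+z$ and \dots implies \dots'' is not a usable statement, and you never use multiplicative factoriality in Case~2.

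\emph{Sign information on $\rho$.} From $(1+p+p^2)(1+p^3)=(1+p)(1+p^2+p^4)$ and the cancellation law, one gets $p^2-p+1\in S$ and $p^2-p\notin S$ for each irrational common atom $p$. After orienting $\rho$, this gives $\rho_1<0<\rho_p$.

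\emph{Multiplicative obstruction.} Such a $p$ divides no positive integer in $S^*$.

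Only then does the final contradiction go through. It uses the multiplication-by-$a$ endomorphism $B$ of $\zz^{(\A_+(S))}$, which is well defined because each $a\beta$ has a unique additive factorization, again by cancellation. It satisfies $B\rho=t\rho$ with $t<0$. It produces an atom $\gamma$ and an equation $m'+\gamma y'=\gamma a$ that the cancellation law forbids. As written, your Case~2 is a plan rather than a proof.
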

\smallskip

In particular, a semidomain is bi-length-factorial if and only if it is bi-factorial. Among additively reduced semidomains, Theorem~\ref{thm:bilf} leaves only $\nn_0$, up to isomorphism, because the additive monoid of a nonzero ring is not reduced. Every positive semidomain is additively reduced, so this includes the positive case.
\smallskip

By the semidomain characterization of Bu, Vulakh, and Zhao recalled above, the hypothesis that $S^*$ is length-factorial already implies that it is factorial. The main step is therefore the following additive upgrade.
\smallskip

\begin{theorem}[Additive upgrade]\label{thm:upgrade}
    Let $S$ be a semidomain. If $(S, +)$ is length-factorial and $S^*$ is factorial then $(S, +)$ is factorial.
\end{theorem}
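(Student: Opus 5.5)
The plan is to split according to whether $(S,+)$ is reduced, as the abstract suggests. First suppose $(S,+)$ has a nonzero unit $u$, so $-u \in S$. Then $-1 = (-u)u^{-1}$ makes sense in the field, but we need $-1\in S$. Since $u \in S^*$ and $S^*$ is factorial, write $u = \varepsilon a_1\cdots a_k$ with $\varepsilon \in (S^*)^\times$ and atoms $a_i$. The additive inverse $-u = (-1)u$ lies in $S$. I would show that $u$ is a multiplicative unit: if an atom $a$ divides $u$ in $S^*$, then $u/a \in S$ and $-u/a$ should be in $S$ too. One would then argue that the additive units form an ideal $U(S,+)$ of $S$. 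This ideal contains $u$ and $u\cdot s$ for all $s$. Prime factorization of $u$ then gives, for each atom $a$ of $S^*$, the relation $a + (-a)$; the key point is that $-1 \in S$ follows from $1 = \varepsilon^{-1}\cdots$.

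More concretely, I would use the fact that $S$ is a ring iff $-1\in S$. I would try to prove that the set $\{s : -s\in S\}$ is a nonzero ideal $I$ closed under multiplication by $S$, and that factoriality of $S^*$ forces an element of $I$ to be a unit. This would be done by taking $x\in I$ with the fewest prime factors: if $x = p y$, analyze $x + (-x)$ and length-factoriality. Once $-1\in S$, $S$ is an integral domain, so $S^*$ factorial means $S$ is a UFD, and $(S,+)$ is a group, hence trivially factorial. If this ideal argument resists, I would fall back on length-factoriality of $(S,+)$: a nontrivial unit group together with the atoms of $(S,+)$ gives many associates, and I would exploit that.

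In the additively reduced case, let $\A$ be the set of additive atoms. The key step is to show that $1$ is the only additive atom, so $(S,+)=\nn_0$, which is factorial. Length-factorial monoids have at most one essential relation among atoms (the structure result of \cite{CCGS21}). In particular, if $(S,+)$ is length-factorial but not factorial, there is a unique minimal relation $\sum_i m_i a_i = \sum_j n_j b_j$ with $\sum m_i \neq \sum n_j$, and all other relations are multiples of it. I would multiply this relation by elements $s\in S^*$. Multiplication by an atom maps the finitely generated submonoid into $(S,+)$, and the products decompose into atoms again. Using the uniform-multiplier idea, I would choose $N$ so that $N$ times every lattice point of the cone generated by the finitely many atoms involved lies in the monoid. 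This yields a cancellation: if $d \mid x+y$ and $d\mid x$ in $S^*$, then $d \mid y$ after scaling. I would then show that no additive atom other than $1$ can divide a positive integer $n\cdot 1$ in $S^*$. Combined with the fact that $1$ is an additive atom (by reducedness), this forces the relation to involve only integer multiples of $1$, a contradiction.

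The main obstacle I expect is this last step. Turning the single-relation structure plus multiplicative factoriality into the statement that additive atoms cannot divide integers requires careful control of how multiplication by primes of $S^*$ permutes additive atoms, and that is where the cone-multiplier argument has to do the real work.
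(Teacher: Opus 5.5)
Your proposal has genuine gaps in both cases; the additively reduced case is missing the ideas that actually close the argument.

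\textbf{Non-reduced case.} This part contains a false step and misses the mechanism. You propose to show that an additive unit $u$ is a multiplicative unit. This already fails in $\zz$, where $2$ is an additive unit but not a multiplicative one. Your fallback can be completed: take an element $x$ of $I=\{s\in S : -s\in S\}$ with fewest prime factors. But analyzing $x+(-x)$ with additive length-factoriality will not do it; additive length-factoriality plays no role here. What is needed is the identity $x^2=(-x)^2$. If $x=py$ with $p$ prime in $S^*$, then $p\mid_{S^*}(-x)^2$ forces $p\mid_{S^*}-x$. Hence $-y\in S$, and $y\in I$ has fewer prime factors. The paper does this in one stroke: $a^2=(-a)^2$ and unique factorization give $-a=ua$ with $u\in S^\times$. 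Thus $-1=u\in S$, and $S$ is a unique factorization domain.

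\textbf{Additively reduced case.} You have located the difficulty but not supplied the ideas that resolve it, and your closing step does not follow.

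\emph{Cancellation.} The uniform multiplier does not by itself yield cancellation; one first needs a ray. From $m+ry=rz$ with $r$ a multiplicative atom, the paper expands $(Nm+ry)^L$ and uses primality of $r$ to get $y+Nm/r\in S$ for all $N$. The multiplier is then applied to every power $c^j$ of $c=m/r$, with $D$ independent of $j$. Then $r^j\mid_{S^*}Dm^j$ for all $j$ forces $r\mid_{S^*}m$. A statement of the form ``$d$ divides $y$ after scaling'' is useless when $d\mid_{S^*}D$.

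\emph{Integer-divisor obstruction.} You assert it for every additive atom other than $1$. The paper's proof needs an irrational atom $a$ with $\rho_1<0<\rho_a$, and obtaining these signs is a separate step missing from your plan. It also requires the existence of an irrational common atom $p$, together with $S^\times=\{1\}$ and $S\cap\qq=\nn_0$. For such $p$, the identity $(1+p+p^2)(1+p^3)=(1+p)(1+p^2+p^4)$ and cancellation give $p^2-p+1\in S$ and $p^2-p\notin S$. The equation $(p^2-p+1)+p=1+p^2$ then orients $\rho$.

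\emph{Endgame.} Knowing that atoms other than $1$ do not divide integers does not force $\rho$ to be supported on $\{1\}$, and nothing in your plan connects the two. The paper's endgame runs as follows.
\begin{enumerate}
\item Lift multiplication by an irrational common atom $a$ to a map $B$ on factorizations. It is well defined because each $a\beta$ has a unique additive factorization, by the forbidden equation $m+dy=d\beta$.
\item Use $B\rho=t\rho$ together with $x+a=1+a^2$, where $x=a^2-a+1$, to confine the positive support of $\rho$ to $\{a,\gamma\}$.
\item Show that $\gamma$ is a common atom dividing an atom in the factorization of $a\gamma$.
\item Conclude with the forbidden equation $m'+\gamma y'=\gamma a$.
\end{enumerate}
The integer obstruction is used only to ensure $a\gamma\notin\nn$.
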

\smallskip

Section~\ref{sec:background} introduces the factorization terminology and the group and cone constructions used in the proofs, and shows that a multiplicatively factorial semidomain with a nonzero additive unit is a unique factorization domain. In Section~\ref{sec:upgrade}, we reduce Theorem~\ref{thm:upgrade} to the additively reduced case, analyze the cyclic group of additive relations, and use a cone argument to establish cancellation of divisibility across sums. This allows us to determine the signs of certain coordinates of the relation, prove an integer-divisor obstruction, and derive a contradiction by considering multiplication on the additive factorization group. Section~\ref{sec:completion} treats the additively reduced bi-factorial case and completes the unrestricted classification in Theorem~\ref{thm:bilf}. In Section~\ref{sec:quasi-factoriality}, we discuss quasi-factoriality in integral domains and positive semirings, give a quasilocal criterion for a relation of length two, and treat rational and transcendental monogenic semirings.
\smallskip

%%%%%%%%%%%%%%%%%%%%%%%%%%%%%
%%%%%%%%%%%%%%%%%%%%%%%%%%%%%
\bigskip
\section{Background}\label{sec:background}

We collect the factorization terminology and the algebraic and geometric constructions used in the proofs.
\smallskip

\medskip
\subsection{Monoids and Factorizations}

A \emph{monoid} is a set equipped with an associative operation and an identity element. As stipulated above, all monoids in this paper are commutative and \emph{cancellative}: in multiplicative notation, $ab = ac$ implies $b = c$. An element of a monoid $M$ is a \emph{unit} if it has an inverse in $M$. The group of units is denoted by $M^\times$. The monoid is \emph{reduced} if its identity is its only unit. Two elements are \emph{associates} if one is a unit multiple of the other. For $a, b \in M$, we write $a \mid_M b$ if $b = ac$ for some $c \in M$, and say that $a$ \emph{divides} $b$.
\smallskip

A nonunit $a \in M$ is an \emph{atom}, or \emph{irreducible element}, if each equality $a = bc$ with $b, c \in M$ forces one factor to be a unit. The monoid $M$ is \emph{atomic} if every nonunit is a finite product of atoms. A \emph{factorization} of an element is its expression as a unit times a product of atoms, considered up to order and associates. Its \emph{length} is the number of atom factors, counted with multiplicity; a unit has the empty factorization, of length zero. An atomic monoid is \emph{factorial} if each element has exactly one factorization, \emph{length-factorial} if distinct factorizations of each element have distinct lengths, and \emph{half-factorial} if all factorizations of each element have the same length. A length-factorial monoid is \emph{proper} if it is not factorial. These definitions apply to additively written monoids by replacing products by sums and the identity $1$ by $0$. In particular, every abelian group, regarded as a monoid, is factorial because all its elements are units.
\smallskip

A nonunit $p \in M$ is \emph{prime} if $p \mid_M ab$ implies $p \mid_M a$ or $p \mid_M b$ for all $a, b \in M$. A \emph{greatest common divisor} of a nonempty finite family of elements is a common divisor that is divisible by every common divisor of that family. In a factorial monoid, every atom is prime, and greatest common divisors exist and are unique up to associates. These facts follow by comparing atom multiplicities in factorizations: a greatest common divisor takes the minimum multiplicity of each atom. The same comparison gives Euclid's lemma in the form used here: if a greatest common divisor of $a$ and $b$ is a unit and $a \mid_M bc$ then $a \mid_M c$.
\smallskip

Fix an integer $n \ge 2$. An atomic monoid $M$ is \emph{quasi-$n$-factorial} if each element has at most one factorization of length $n$, up to order and associates. Thus, $M$ is length-factorial if and only if it is quasi-$n$-factorial for every integer $n \ge 2$. An atomic monoid is \emph{square-factorial} if, for atoms $u, v, w$, the equality $u^2 = vw$ implies that $u, v, w$ are associates. These notions and their relation to unique factorization are discussed in~\cite{CC26}.
\smallskip

\medskip
\subsection{Semidomains and Integral Domains}

A \emph{semiring} is a set with associative and commutative addition, associative multiplication, additive identity $0$, and multiplicative identity $1$, such that multiplication distributes over addition and $0$ is absorbing for multiplication. A \emph{semidomain} in this paper is a subsemiring of a field containing $0$ and $1$. In particular, its addition and its multiplication on nonzero elements are commutative and cancellative. It is \emph{positive} if it is isomorphic to a subsemiring of $\rr_{\ge 0}$ containing $1$. We identify a positive semidomain with such a subsemiring when convenient.
\smallskip

For a semidomain $S$, the notation $S^* := S \setminus \{0\}$ denotes the multiplicative monoid, and $S^\times$ denotes its group of units. The sets of additive and multiplicative atoms are denoted by $\A_+(S)$ and $\A_\times(S)$, respectively. An element of their intersection is a \emph{common atom}. A semidomain is \emph{additively reduced} if $0$ is the only unit of $(S, +)$. Every positive semidomain is additively reduced. In an additively reduced semidomain, an additive atom is precisely a nonzero element that cannot be written as a sum of two nonzero elements. We say that $S$ is \emph{bi-atomic}, \emph{bi-factorial}, \emph{bi-length-factorial}, or \emph{bi-half-factorial} if both $(S, +)$ and $S^*$ have the corresponding property.
\smallskip

In an additively reduced semidomain $S$, a finite sum of nonzero elements is nonzero, since otherwise a summand would have an additive inverse in $S$. In particular, the ambient field has characteristic zero. We identify its prime subfield with $\qq$ and the canonical copies of $\zz$ and $\nn_0$ with the usual integers and nonnegative integers. An element is called \emph{rational} if it belongs to this copy of $\qq$, and \emph{irrational} otherwise. No negative rational belongs to $S$: if $-a/b \in S$ with $a, b \in \nn$ then $-1 = b(-a/b) + (a - 1) \in S$, contradicting additive reducedness. Hence $S \cap \qq \subseteq \qq_{\ge 0}$.
\smallskip

For positive real numbers $\alpha_1, \ldots, \alpha_t$, the semiring $\nn_0[\alpha_1, \ldots, \alpha_t]$ consists of the values $f(\alpha_1, \ldots, \alpha_t)$ with $f \in \nn_0[x_1, \ldots, x_t]$. A \emph{monogenic positive semiring} has the form $\nn_0[\alpha]$ for one positive real number $\alpha$. The generator is \emph{algebraic} if it is a root of a nonzero polynomial in $\qq[x]$, and \emph{transcendental} otherwise. A quadratic generator has degree two over $\qq$, meaning that the least degree of a nonzero polynomial in $\qq[x]$ vanishing at it is two. A rational monogenic semiring has a generator in $\qq_{> 0}$.
\smallskip

If $T \subseteq S^*$ is a multiplicative submonoid then the \emph{localization} of $S$ at $T$ is the subsemiring $T^{-1}S := \{s/t : s \in S,\ t \in T\}$ of the ambient field. This is the meaning of localization in the introductory discussion of~\cite{GPR24}.
\smallskip

An \emph{integral domain} is a commutative ring with $1 \ne 0$ in which a product of nonzero elements is nonzero. Factorization properties of an integral domain refer to its multiplicative monoid of nonzero elements; in particular, a \emph{unique factorization domain} is one for which that monoid is factorial. An \emph{ideal} of a commutative ring $R$ is an additive subgroup closed under multiplication by elements of $R$. An ideal is \emph{maximal} if it is proper and is contained in no larger proper ideal. A domain is \emph{quasilocal} if it has a unique maximal ideal $\mathfrak{m}$; its nonunits are then exactly the elements of $\mathfrak{m}$. The ideal $\mathfrak{m}^2$ consists of finite sums of products of two elements of $\mathfrak{m}$. For $p \in R$, the notation $pR$ denotes the principal ideal $\{pr : r \in R\}$.
\smallskip

Multiplicative factoriality forces a semidomain with a nonzero additive unit to be a unique factorization domain. The following observation will reduce the proof of Theorem~\ref{thm:upgrade} to the additively reduced case.
\smallskip

\begin{lemma}\label{lem:ring-or-reduced}
    Let $S$ be a semidomain such that $S^*$ is factorial. If $S$ is not additively reduced then $S$ is a unique factorization domain.
\end{lemma}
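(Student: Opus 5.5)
The plan is to show that $S$ is already a ring, that is, $-1 \in S$. Once this holds, $S$ is closed under additive inverses, since $-s = (-1)s$. Being a subring of a field, $S$ is then an integral domain, and the hypothesis that $S^*$ is factorial says exactly that $S$ is a unique factorization domain. So everything reduces to producing $-1$ inside $S$ from a single nonzero additive unit.

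Suppose $u \in S$ is nonzero and $-u \in S$. Then the set $U := \{s \in S : -s \in S\}$ of additive units is an additive subgroup of $S$. It is also closed under multiplication by arbitrary elements of $S$, because $-(rs) = r(-s)$. Thus $U$ is a nonzero ideal-like subset of $S$.

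The key step is to locate $-1$ using multiplicative factoriality. Write $u$, up to a multiplicative unit, as a product of atoms, $u = \varepsilon a_1 \cdots a_k$ with $\varepsilon \in S^\times$; the case $k = 0$ means $u$ is itself a multiplicative unit. If $u \in S^\times$, then $u^{-1} \in S$ and $-1 = u^{-1}(-u) \in S$, and we are done.

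Otherwise, the most direct route is to exploit the element $-u \in S^*$ together with $u$. The identity $(-u)\cdot(-u) = u \cdot u$ shows that $u^2$ has the two expressions $u\cdot u$ and $(-u)(-u)$. Writing $-u = \delta b_1\cdots b_m$ and comparing factorizations in the factorial monoid $S^*$, the atom multisets of $u$ and $-u$ must agree up to associates. More concretely, $-u \mid_{S^*} u^2$ with quotient $-u$, so by unique factorization the atoms of $-u$ are associates of those of $u$. Since $u$ and $-u$ have the same factorization length and the same multiplicities, $-u = \eta u$ for some $\eta \in S^\times$. Cancelling $u$ in the field gives $\eta = -1$, so $-1 \in S^\times \subseteq S$.

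I would double-check this comparison carefully. It amounts to the general fact that in a factorial monoid, $x^2 = y^2$ forces $x$ and $y$ to be associates. This holds because the multiplicity of each atom class in $x^2$ is twice its multiplicity in $x$. I expect this to be the only nontrivial point, and it is elementary.

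Finally, once $-1 \in S$, the set $S$ is closed under negation, hence $S$ is a subring of a field containing $1$. It is therefore an integral domain whose multiplicative monoid $S^*$ is factorial, that is, a unique factorization domain.
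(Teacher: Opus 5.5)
Your proof is correct and is essentially the paper's argument. Both compare multiplicities in the unique factorizations of $u^2=(-u)^2$ to conclude that $-u=\eta u$ with $\eta\in S^\times$, which gives $\eta=-1\in S$, so $S$ is a ring and hence a unique factorization domain; the auxiliary set $U$ and the separate case $u\in S^\times$ are harmless but unnecessary, because the squaring argument already handles every nonzero additive unit.
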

\smallskip

\begin{proof}
    Choose a nonzero additive unit $a$ of $S$, so that $a, -a \in S^*$. We have
    \[
        a^2 = (-a)^2.
    \]
    Comparing atom multiplicities in the unique multiplicative factorizations of these squares shows that $a$ and $-a$ are associates. Thus, $-a = ua$ for some $u \in S^\times$. Cancelling $a$ in the ambient field gives $u = -1$. Consequently, $-s = (-1)s \in S$ for every $s \in S$, so $S$ is a ring. As a subring of a field, it is an integral domain, and the factoriality of $S^*$ makes it a unique factorization domain.
\end{proof}
\smallskip

\medskip
\subsection{Groups and Factorization Coordinates}

For subsets $X$ and $Y$ of an additively written abelian group, their \emph{difference set} is
\[
    X - Y := \{x - y : x \in X,\ y \in Y\}.
\]
\smallskip

The \emph{quotient group} of a multiplicatively written cancellative monoid $M$ is obtained by adjoining formal fractions $a/b$, where $a, b \in M$ and $a/b = c/d$ exactly when $ad = bc$. For an additively written monoid, the corresponding construction is the \emph{Grothendieck group} $\gp(M)$ of formal differences: $a - b = c - d$ exactly when $a + d = b + c$. Addition is defined by $(a - b) + (c - d) := (a + c) - (b + d)$. Cancellation makes the natural map from the monoid to its group injective. In additive notation, this map is $\iota \colon M \to \gp(M)$, given by $\iota(a) := a - 0$.
\smallskip

The pair $(\gp(M), \iota)$ satisfies the following \emph{universal property}: for every abelian group $H$ and monoid homomorphism $f \colon M \to H$, there is a unique group homomorphism $\overline{f} \colon \gp(M) \to H$ such that $\overline{f} \circ \iota = f$. Explicitly, $\overline{f}(a - b) = f(a) - f(b)$ for all $a, b \in M$. This property characterizes the pair up to a unique isomorphism commuting with the canonical maps from $M$.
\smallskip

    For a semidomain $S$, we write $\gp_+(S)$ for the Grothendieck group of its additive monoid $(S, +)$. The multiplicative quotient group of $S$ identifies with $\{u/v : u, v \in S^*\}$ in the ambient field. The canonical isomorphism from $\gp_+(S)$ to the additive subgroup $S - S$ sends each formal difference to the corresponding difference in that field. We use this identification when writing $\gp_+(S) = S - S$. The latter is a ring, since products of differences are again differences of elements of $S$.
\smallskip

For an index set $I$, write $\nn_0^{(I)}$ and $\zz^{(I)}$ for the families of nonnegative integers and integers, respectively, indexed by $I$ and having finite support. Addition is coordinatewise. These are the \emph{free commutative monoid} and the \emph{free abelian group} on $I$. For a coordinate family $z = (z_i)_{i \in I}$, its \emph{support} is $\supp(z) := \{i \in I : z_i \ne 0\}$. The \emph{standard basis vector} $e_i$ has coordinate $1$ at $i$ and coordinate $0$ elsewhere. Every $z \in \zz^{(I)}$ has a unique decomposition $z = z^+ - z^-$ with disjoint supports, where $z_i^+ := \max\{z_i, 0\}$ and $z_i^- := \max\{-z_i, 0\}$. For $z \in \nn_0^{(I)}$, the sum of its coordinates is its length. Direct sums, denoted by $\oplus$, use these componentwise operations and require finite support when the indexing set is infinite.
\smallskip

    Let $M$ be a reduced atomic additive monoid, and denote its set of atoms by $\A_+(M)$. The \emph{factorization homomorphism} $\pi \colon \nn_0^{(\A_+(M))} \to M$ sends $\sum_{a \in \A_+(M)} z_a e_a$ to $\sum_{a \in \A_+(M)} z_a a$. The fiber over each element is its set of factorizations. It extends to a surjective group homomorphism $\pi \colon \zz^{(\A_+(M))} \to \gp(M)$. The \emph{relation group} is its kernel $\ker\pi := \{z \in \zz^{(\A_+(M))} : \pi(z) = 0\}$. A group is \emph{cyclic} if one element generates it; in additive notation, a cyclic subgroup generated by $\rho$ is $\zz\rho$. A nonzero integer coordinate vector is \emph{primitive} if the greatest common divisor of its coordinates is $1$.
\smallskip

\medskip
\subsection{Lattices and Cones}

An abelian group $L$ is \emph{torsion-free} if $ng = 0$ with $n \in \nn$ and $g \in L$ forces $g = 0$. For the rest of this subsection, let $L$ be a finitely generated torsion-free abelian group. Such a group has an integer basis and is therefore isomorphic to $\zz^d$ for some $d \in \nn_0$, called its \emph{rank}. The vector spaces $L \otimes_{\zz} \qq$ and $L \otimes_{\zz} \rr$ are obtained by allowing rational and real coefficients, respectively, on an integer basis of $L$. We identify $L$ with its natural image in these spaces. A \emph{lattice} in a finite-dimensional real vector space is the integer span of a real basis of that space. Thus, $L$ is a lattice in $L \otimes_{\zz} \rr$. This does not assert that a given embedding of $L$ into $\rr$ has discrete image; the coordinate space $L \otimes_{\zz} \rr$ can have dimension greater than one.
\smallskip

For $J \subseteq L$, write $\zz J$ for the subgroup of integer linear combinations of elements of $J$ and $\operatorname{span}_{\rr}(J)$ for their real linear span in $L \otimes_{\zz} \rr$. For a subgroup $L' \subseteq L$, the \emph{index} $[L:L']$ is the number of cosets of $L'$ in $L$. If this index is a finite integer $d$ then $dL \subseteq L'$, since the quotient group has order $d$. When $J$ is a finite linearly independent subset of $L$, the group $\zz J$ has finite index in $L \cap \operatorname{span}_{\rr}(J)$.
\smallskip

For a finite subset $B$ of a real vector space, its \emph{generated cone} is
\[
    \cone(B) := \left\{\sum_{b \in B} t_b b : t_b \in \rr_{\ge 0}\text{ for every }b \in B\right\}.
\]
Such a cone is closed in the finite-dimensional span of $B$. If $B \subseteq L$ then the \emph{lattice points} of this cone are the elements of $\cone(B) \cap L$. All limits involving these cones below are taken in the indicated real coordinate spaces.
\smallskip

\bigskip
\section{The Additive Upgrade}
\label{sec:upgrade}

We first reduce Theorem~\ref{thm:upgrade} to the additively reduced case. If $S$ satisfies its hypotheses and is not additively reduced then Lemma~\ref{lem:ring-or-reduced} makes $S$ a unique factorization domain. Thus, $(S, +)$ is a group and is factorial.

Throughout the rest of this section, let $S$ be an additively reduced semidomain. To prove Theorem~\ref{thm:upgrade}, suppose, by way of contradiction, that $(S, +)$ is a proper length-factorial monoid and $S^*$ is factorial.

    Let $\F := \nn_0^{(\A_+(S))}$ be the free commutative monoid on $\A_+(S)$.

    We identify $\gp_+(S)$ with $S - S$ as in Section~\ref{sec:background}.

    Let $e_a$ denote the basis vector corresponding to $a \in \A_+(S)$, and let $\pi \colon \zz^{(\A_+(S))} \to \gp_+(S)$ be the evaluation homomorphism determined by $\pi(e_a) = a$. The restriction of $\pi$ to $\F$ is the additive factorization homomorphism. All vectors in these direct sums have finite support; for $z \in \zz^{(\A_+(S))}$, we write $\supp(z) := \{a \in \A_+(S) : z_a \ne 0\}$. The group $\gp_+(S)$ is also a subring of the ambient field: the product of two differences of elements of $S$ is again a difference of elements of $S$. This observation is needed when powers of elements of $\gp_+(S)$ occur below. Since the ambient field has characteristic zero, $\gp_+(S)$ is torsion-free.

%%%%%%%%%%%%%%%%%%%%%%%%%%%%%%%%%%%%%%%%%%%%%%%%%%%%%%%%%%
\medskip
\subsection{The Relation Group and Preliminary Reductions}

    Let $K := \ker\pi$, and let $\lambda \colon \zz^{(\A_+(S))} \to \zz$ be the homomorphism that sums coordinates. If $z \in K \cap \ker\lambda$, write $z = z^{+} - z^{-}$ with $z^{+}, z^{-} \in \F$ and disjoint supports. Then $\pi(z^{+}) = \pi(z^{-})$ and $\lambda(z^{+}) = \lambda(z^{-})$. Length-factoriality gives $z^{+} = z^{-}$, so $z = 0$. Thus, $\lambda$ is injective on $K$. Since the additive monoid is not factorial, $K \ne 0$, and consequently
\begin{equation}\label{eq:cyclic-kernel}
    K = \zz\rho
\end{equation}
    for a nonzero vector $\rho$. The vector $\rho$ is \emph{primitive}, meaning that its coordinates have greatest common divisor $1$. Indeed, if $\rho = d\eta$ with $d > 1$ and $\eta \in \zz^{(\A_+(S))}$ then torsion-freeness of $\gp_+(S)$ gives $\eta \in K$, contradicting $K = \zz\rho$. Moreover, $\lambda(\rho) \ne 0$, and $\rho$ has both positive and negative coordinates because $\pi(\rho) = 0$ and a nonempty sum of nonzero elements of $S$ cannot be zero.

\begin{lemma}[Basic reductions]\label{lem:basic}
    The following statements hold.
    \begin{enumerate}[label=(\arabic*),ref=\arabic*]
            \item\label{item:basic-units} $1 \in \A_+(S)$ and $S^\times = \{1\}$.
        \smallskip

        \item\label{item:basic-divisors} Every multiplicative divisor of an additive atom is an additive atom.
        \smallskip

        \item\label{item:basic-rationals} $S \cap \qq = \nn_0$.
        \smallskip

        \item\label{item:basic-integers} For each $n \in \nn$, the only additive factorization of $n$ is $ne_1$.
    \end{enumerate}
\end{lemma}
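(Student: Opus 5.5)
The plan is to prove the four items in order. Throughout I use the standing assumptions: $S$ is additively reduced, $(S,+)$ is proper length-factorial with $K=\zz\rho$, $\lambda(\rho)\neq 0$, and $S^*$ is factorial.

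\emph{Step 1: item (\ref{item:basic-units}).} First, $1$ is an additive atom. Suppose $1=a+b$ with $a,b\in S^*$. Then every $s\in S^*$ satisfies $s=sa+sb$ with both summands nonzero, so no additive atoms exist. This contradicts atomicity, since $S\neq 0$ and $1$ is not an additive unit.

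Next, let $u\in S^\times$. Multiplication by $u$ is an automorphism of $(S,+)$, so it permutes $\A_+(S)$. It therefore induces a coordinate permutation $\sigma$ of $\zz^{(\A_+(S))}$ preserving $K$ and $\lambda$. Hence $\sigma\rho=\pm\rho$, and since $\lambda(\sigma\rho)=\lambda(\rho)\neq 0$, in fact $\sigma\rho=\rho$. So $\sigma$ permutes the finite set $\supp(\rho)$, and some power $u^k$ with $k\ge 1$ fixes an atom $a\in\supp(\rho)$. Thus $u^ka=a$, and cancelling gives $u^k=1$. If $u\neq 1$ has order $m>1$, then $1+u+\cdots+u^{m-1}=0$ in the ambient field. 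This is a nonempty sum of nonzero elements of $S$ equal to zero, contradicting additive reducedness. Hence $S^\times=\{1\}$.

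\emph{Step 2: item (\ref{item:basic-divisors}).} Let $a=bc$ with $a\in\A_+(S)$ and $b,c\in S^*$. The element $b$ is not an additive unit because it is nonzero. If $b=x+y$ with $x,y$ nonzero, then $a=xc+yc$ with $xc,yc$ nonzero, which contradicts that $a$ is an atom. So $b\in\A_+(S)$.

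\emph{Step 3: items (\ref{item:basic-rationals}) and (\ref{item:basic-integers}).} These two items are where the cyclicity of $K$ must really be used, and I expect them to be the main obstacle. The common tool is the following. For $s\in S^*$, multiplication by $s$ sends any relation $\sum z_a a=0$ to $\sum z_a (sa)=0$. Replacing each $sa$ by a chosen additive factorization produces a new element of $K$, which must be an integer multiple of $\rho$.

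For (\ref{item:basic-integers}), suppose some $n\in\nn$ has a factorization $z\neq ne_1$, and take $n$ minimal. Then $ne_1-z\in K$, so $ne_1-z$ is a nonzero multiple of $\rho$. Consequently $\supp(\rho)\subseteq\{1\}\cup\supp(z)$, and $1\in\supp(\rho)$ by minimality. Pick an atom $a\ne 1$ in $\supp(z)$, which exists because $n$ has no factorization other than $ne_1$ involving only the atom $1$. Multiplying the relation by $a$ gives a relation in which $ne_a$ appears. It is balanced against factorizations of the elements $aa'$, and by (\ref{item:basic-divisors}) these terms are sums of atoms of controlled form. I would then compare coordinates and total lengths with the multiple of $\rho$ this relation must equal. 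If the new relation is not a multiple of $\rho$, this yields a second independent relation, contradicting $K=\zz\rho$. Otherwise it forces $a$ to divide $1$ multiplicatively, which is impossible by (\ref{item:basic-units}).

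For (\ref{item:basic-rationals}), recall that $S\cap\qq\subseteq\qq_{\ge0}$. Suppose $p/q\in S$ in lowest terms with $q>1$. Using (\ref{item:basic-integers}), the identity $q\cdot(p/q)=p$ shows that a rational non-integral atom $r$ must occur in the factorization of $p/q$, or else of some power $(p/q)^k$. The relation $(\mathrm{num}\,r)\,e_1-(\mathrm{den}\,r)\,e_r\in K$ pins $\rho$ to support $\{1,r\}$. Multiplying this relation by $r$ gives a relation involving a factorization of $r^2$. Comparing denominators shows that relation is not a multiple of $\rho$, which contradicts cyclicity.
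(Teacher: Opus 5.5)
Your Steps 1 and 2 are correct and coincide with the paper's arguments. The gap is in Step 3, at item (\ref{item:basic-integers}). Because $K=\zz\rho$, the relation you get after multiplying by $a$ is automatically some multiple $k\rho$. The alternative ``not a multiple of $\rho$, hence a second independent relation'' therefore never occurs, and your dichotomy carries no content. The appeal to (\ref{item:basic-divisors}) also does not help, since it says nothing about additive factorizations of the products $aa'$.

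What is missing is a sign computation of $k$. Orient $\rho$ so that $\rho_1<0$; then $\rho_{a'}>0$ on $\supp(z)$. Writing $Z$ for the induced factorization of $na$, you have $Z-ne_a=k\rho$. The $1$-coordinate gives $Z_1=k\rho_1\ge 0$, so $k\le 0$. A second atom $a''\in\supp(z)\setminus\{a\}$ gives $Z_{a''}=k\rho_{a''}\ge 0$, so $k\ge 0$. Hence $Z=ne_a$, and the chosen factorization of $aa''$ is supported on $\{a\}$. So $aa''\in\nn a$ and $a''\in\nn$, which is impossible for an additive atom $a''\ne 1$. The contradiction is not that $a$ divides $1$.

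This sign argument needs $\supp(z)$ to have at least two elements. When $\supp(z)=\{a\}$, so that $n=ra$ and $a$ is a rational atom, no second coordinate forces $k=0$. The paper settles this case with (\ref{item:basic-rationals}), which it proves first and independently. For $q\in S\cap\qq_{>0}$, the factorizations of $q$ and of $qc$ with $c\in E:=\{1\}\cup\supp(\rho)$ are all supported on $E$. So multiplication by $q$ preserves the finitely generated torsion-free group $\zz E$, where it is given by an integer matrix equal to $qI$, forcing $q\in\zz$.

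Your plan instead derives (\ref{item:basic-rationals}) from (\ref{item:basic-integers}), so this case is circular as written. You can repair it by moving the denominator comparison from your sketch of (\ref{item:basic-rationals}) to the front. Suppose $a=n'/r'$ in lowest terms with $r'\ge 2$. Then $\supp(\rho)=\{1,a\}$. Multiplying $r'a=n'$ by $a$ shows $r'Y-n'e_a\in\zz\rho$ for any factorization $Y$ of $a^2$, so $Y$ is supported on $\{1,a\}$. This gives $n'^2=xr'^2+yn'r'$ with $x,y\in\nn_0$, so $r'\mid n'^2$, a contradiction.

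Once (\ref{item:basic-integers}) is established, (\ref{item:basic-rationals}) is immediate. If $X$ factors $p/q$, then $qX$ factors $p$, so $qX=pe_1$ and $q\mid p$. Your detour through powers $(p/q)^k$ is unnecessary.
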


\begin{proof}
        (\ref{item:basic-units}) Choose an additive atom $a$, which exists by atomicity since $1 \ne 0$. If $1 = x + y$ for nonzero $x, y \in S$ then $a = ax + ay$ contradicts the additive irreducibility of $a$. Thus, $1 \in \A_+(S)$.

        If $u \in S^\times$, multiplication by $u$ permutes $\A_+(S)$ and induces a permutation automorphism $T_u$ of $\zz^{(\A_+(S))}$. It preserves $K$ and $\lambda$, so $T_u(\rho) = \varepsilon\rho$ for some $\varepsilon \in \{1, -1\}$. Since $\lambda(\rho) \ne 0$, comparison of lengths gives $\varepsilon = 1$. Hence multiplication by $u$ permutes the finite nonempty set $\supp(\rho)$. For some $c \in \supp(\rho)$ and $j \in \nn$, we have $u^jc = c$. Cancelling the nonzero element $c$ gives $u^j = 1$. If $u \ne 1$ then the geometric sum identity in the ambient field gives $1 + u + \cdots + u^{j - 1} = 0$, contradicting additive reducedness. Thus, $u = 1$.
    \smallskip

        (\ref{item:basic-divisors}) If $\alpha = xy \in \A_+(S)$ with $x, y \in S^*$ and $x = r + s$ for nonzero $r, s \in S$ then $\alpha = ry + sy$ is a nontrivial additive decomposition. Hence $x \in \A_+(S)$, and similarly $y \in \A_+(S)$.
    \smallskip

        (\ref{item:basic-rationals}) By the observations in Section~\ref{sec:background}, $S \cap \qq \subseteq \qq_{\ge 0}$. Take $q = a/b \in S \cap \qq_{> 0}$ with $a, b \in \nn$. Choose $X \in \F$ with $\pi(X) = q$. Then $bX - ae_1 \in \zz\rho$. Put $E := \{1\} \cup \supp(\rho)$, and let $\zz E$ be the subgroup of $\gp_+(S)$ generated by $E$. The preceding relation implies $\supp(X) \subseteq E$, so $q \in \zz E$. For each $c \in E$, choose a factorization $Y_c$ of $qc$. The relation $bY_c - ae_c \in \zz\rho$ similarly gives $\supp(Y_c) \subseteq E$. Thus, $q(\zz E) \subseteq \zz E$. The group $\zz E$ is nonzero, finitely generated, and torsion-free. Multiplication by $q$ is therefore represented by an integer matrix on a basis of $\zz E$. On $\zz E \otimes_{\zz} \qq$, this map acts as $qI$. The integer matrix is therefore $qI$, so its diagonal entries give $q \in \zz$. Conversely, $\nn_0 \subseteq S$ because $S$ contains $1$. This proves~(\ref{item:basic-rationals}).
    \smallskip

        (\ref{item:basic-integers}) Suppose that $n$ has a factorization different from $ne_1$. Cancel the common copies of $1$. We obtain $X' \in \F \setminus \{0\}$ and $m \in \nn$ with $1 \notin \supp(X')$ and $\pi(X') = m$. After orienting $\rho$, we have $X' - me_1 = h\rho$ for some $h \in \nn$. Thus, for distinct $a_1, \ldots, a_s \in \A_+(S) \setminus \{1\}$ and $d, r_1, \ldots, r_s \in \nn$, we have
    \begin{equation}\label{eq:integer-relation}
        d = \sum_{i = 1}^s r_i a_i,\qquad
        \rho = \sum_{i = 1}^s r_i e_{a_i} - de_1.
    \end{equation}
    The first equality is obtained by applying $\pi$ to $\rho$. The coefficients are integers because they are coordinates of $\rho$.

    If $s = 1$ then $a_1 = d/r_1 \in S \cap \qq = \nn_0$, which is impossible: the only positive integer that can be an additive atom is $1$. If $s \ge 2$, choose a factorization $Z_i$ of $a_1a_i$ for each $i$ and put $Z := \sum_i r_iZ_i$. Multiplying the first equality in~\eqref{eq:integer-relation} by $a_1$ gives $\pi(Z) = da_1$. Hence $Z - de_{a_1} = k\rho$ for some $k \in \zz$. At coordinate $1$, nonnegativity gives $k \le 0$; at coordinate $a_2$, it gives $k \ge 0$. Thus, $k = 0$ and $Z = de_{a_1}$. Every $Z_i$ is therefore supported on $\{a_1\}$. In particular, $a_1a_2 = ta_1$ for some $t \in \nn$. Cancellation gives $a_2 = t$, again a contradiction.
\end{proof}

\begin{lemma}[An irrational common atom]\label{lem:common}
    There exists an irrational element of $\A_+(S) \cap \A_\times(S)$.
\end{lemma}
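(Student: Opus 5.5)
The plan is to produce an irrational additive atom first, and then pass to one of its multiplicative atom factors using Lemma~\ref{lem:basic}(\ref{item:basic-divisors}).

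\emph{Step 1: an irrational additive atom.} By the discussion preceding Lemma~\ref{lem:basic}, the generator $\rho$ of $K$ has both positive and negative coordinates. Hence $\supp(\rho)$ contains at least two distinct additive atoms, so it contains some $a \in \A_+(S)$ with $a \ne 1$.

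This $a$ is irrational. Indeed, if $a \in \qq$ then Lemma~\ref{lem:basic}(\ref{item:basic-rationals}) gives $a \in \nn_0$. But a positive integer $n \ge 2$ is the sum $1 + (n-1)$ of two nonzero elements of $S$, so it is not an additive atom. Also $0$ is not an atom. Thus the only rational additive atom is $1$, a contradiction.

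\emph{Step 2: factor $a$ multiplicatively.} By Lemma~\ref{lem:basic}(\ref{item:basic-units}), $S^\times = \{1\}$. Since $a \ne 1$, the element $a$ is a nonunit of $S^*$. Because $S^*$ is factorial, we may write $a = p_1 \cdots p_k$ with $k \ge 1$ and each $p_i \in \A_\times(S)$. Each $p_i$ is a multiplicative divisor of the additive atom $a$, so Lemma~\ref{lem:basic}(\ref{item:basic-divisors}) gives $p_i \in \A_+(S)$. Thus every $p_i$ is a common atom.

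\emph{Step 3: some $p_i$ is irrational.} Suppose instead that every $p_i$ is rational. By Step 1, the only rational additive atom is $1$, so each $p_i = 1$. But $1$ is a unit, hence not a multiplicative atom, which is a contradiction. (Alternatively, all $p_i = 1$ would force $a = 1$.) Therefore some $p_i$ is irrational, and it is the required element of $\A_+(S) \cap \A_\times(S)$.

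None of these steps looks like a real obstacle. The only points needing care are that $\supp(\rho)$ really contains an atom other than $1$, which follows from $\rho$ having coordinates of both signs, and that $a$ is a nonunit, which uses $S^\times = \{1\}$. The remaining work is carried by Lemma~\ref{lem:basic}(\ref{item:basic-units}), (\ref{item:basic-divisors}), and (\ref{item:basic-rationals}).
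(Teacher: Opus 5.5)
Your proof is correct and follows essentially the paper's route: find an irrational additive atom, note that it is a multiplicative nonunit because $S^\times = \{1\}$, factor it in $S^*$, and use Lemma~\ref{lem:basic}(\ref{item:basic-divisors}) together with rationality to obtain an irrational common atom. The only difference is how you find the irrational additive atom. You take an atom $a \ne 1$ in $\supp(\rho)$ and use that $1$ is the only rational additive atom, whereas the paper takes an atom in an additive factorization of some $s \in S \setminus \nn_0$; both rest on Lemma~\ref{lem:basic}(\ref{item:basic-rationals}).
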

\begin{proof}
    Since $(S, +)$ is not factorial, $S \ne \nn_0$. Choose $s \in S \setminus \nn_0$. By Lemma~\ref{lem:basic}(\ref{item:basic-rationals}), some atom in an additive factorization of $s$ is irrational; call it $\alpha$. By Lemma~\ref{lem:basic}(\ref{item:basic-units}), $\alpha$ is a multiplicative nonunit. Factor it multiplicatively. Every factor is also an additive atom by Lemma~\ref{lem:basic}(\ref{item:basic-divisors}), and at least one factor is irrational because their product is irrational.
\end{proof}

\medskip
\subsection{A Cone Argument for Divisibility}

The next lemma uses a finitely generated cone to produce a uniform multiplier. We state it for an additively reduced semidomain whose additive relation group is finitely generated; multiplicative factoriality is not needed. Applied to $S$, it combines with Lemma~\ref{lem:ray} to yield cancellation of multiplicative divisibility across a sum. This cancellation will establish the sign information needed for the integer-divisor obstruction in Lemma~\ref{lem:integer-obstruction}.

\begin{lemma}[A uniform multiplier for powers along a ray]\label{lem:cone}
        Let $T$ be an additively reduced semidomain whose additive monoid is atomic and has finitely generated relation group. There exists $D \in \nn$, depending only on $(T, +)$, with the following property. If $c \in \gp_+(T)$ and $y \in T$ satisfy $y + nc \in T$ for every $n \in \nn_0$ then $Dc^j \in T$ for every $j \in \nn$.
\end{lemma}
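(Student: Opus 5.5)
The plan is to describe a saturation of $T$ inside $\gp_+(T)$ that does not depend on $c$. I will show that $c$ and all its powers lie in this saturation, and that a fixed multiple $D$ carries the saturation back into $T$. Write $\A := \A_+(T)$, $\pi \colon \zz^{(\A)} \to \gp_+(T)$ for the evaluation map, and $K := \ker \pi$. Because $K$ is finitely generated, the generators of $K$ involve only a finite set $A_0 \subseteq \A$ of atoms, so $K \subseteq \zz^{A_0}$. Passing to the quotient gives
\[
    \gp_+(T) \cong \zz^{(\A \setminus A_0)} \oplus L_0, \qquad L_0 := \zz^{A_0}/K .
\]
Here $L_0$ is identified with $\pi(\zz^{A_0}) \subseteq \gp_+(T)$. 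It is finitely generated and torsion-free, since $\gp_+(T)$ sits in a field of characteristic zero ($T$ being additively reduced). Hence $L_0$ is a lattice in $L_0 \otimes_{\zz} \rr$. Each $g \in \gp_+(T)$ has a free part $f_g \in \zz^{(\A\setminus A_0)}$ and a component $l_g \in L_0$.

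Next I characterize membership in $T$ in these coordinates. Let $B_0 := \pi(A_0) \subseteq L_0$ and $M_0 := \pi(\nn_0^{A_0})$. Suppose $g = \pi(X)$ with $X \in \nn_0^{(\A)}$. Any two integer preimages of $g$ differ by an element of $K \subseteq \zz^{A_0}$, so the restriction of $X$ to $\A \setminus A_0$ is forced to equal $f_g$. Consequently $g \in T$ if and only if $f_g \ge 0$ coordinatewise and $l_g \in M_0$. Now define
\[
    T^{\mathrm{sat}} := \{ g : f_g \ge 0,\ l_g \in \cone(B_0) \cap L_0\}.
\]

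The central step is the uniform multiplier $D$ with $D\, T^{\mathrm{sat}} \subseteq T$. By Carathéodory's theorem, every point of $\cone(B_0)$ lies in $\cone(J)$ for some linearly independent subset $J \subseteq B_0$. There are only finitely many such $J$. For each one, $\zz J$ has finite index $d_J$ in $L_0 \cap \operatorname{span}_{\rr}(J)$, as recalled in Section~\ref{sec:background}. Put $D := \operatorname{lcm}_J d_J$, which depends only on $(T,+)$. Take $l \in \cone(J) \cap L_0$ and write $l = \sum_{b \in J} t_b b$ with $t_b \ge 0$. Then $Dl \in \zz J$, and uniqueness of coordinates on the basis $J$ forces each $Dt_b$ to be a nonnegative integer. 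Hence $Dl \in M_0$, and with the membership criterion this gives $D\,T^{\mathrm{sat}} \subseteq T$. Conversely, if $mg \in T$ for some $m \in \nn$, then $f_g \ge 0$ and $l_g \in \tfrac1m M_0 \subseteq \cone(B_0)$. Therefore
\[
    T^{\mathrm{sat}} = \{ g \in \gp_+(T) : mg \in T \text{ for some } m \in \nn\}.
\]
This description is closed under multiplication in the ring $\gp_+(T)$: if $mg, m'h \in T$, then $mm'gh = (mg)(m'h) \in T$.

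Finally, from $y + nc \in T$ for all $n$ we get $f_y + n f_c \ge 0$ for all $n$, so $f_c \ge 0$. We also get $(l_y + n l_c)/n \in \cone(B_0)$, and since the cone is closed, letting $n \to \infty$ gives $l_c \in \cone(B_0)$. Thus $c \in T^{\mathrm{sat}}$. By multiplicative closure $c^j \in T^{\mathrm{sat}}$, and so $Dc^j \in T$ for every $j \in \nn$.

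The step I expect to need the most care is the uniformity of $D$. It rests on confining all relations to the fixed finite set $A_0$, so that the only non-free part is the fixed lattice $L_0$ and the multiplier does not depend on $c$. Multiplicative closure of the saturation then handles the powers at no extra cost.
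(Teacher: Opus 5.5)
Your proof is correct, and its overall structure matches the paper's. You confine all relations to the finite set $A_0$, which is the paper's $B$, the union of the supports of a finite generating set of the relation group. You split $\gp_+(T)$ into a free part plus the lattice $L_0$. You take $D$ to be the least common multiple of the indices $[L_0\cap\operatorname{span}_{\rr}(J):\zz J]$ over linearly independent $J\subseteq B_0$, using Carath\'eodory. Finally, you use that $\cone(B_0)$ is closed to pass to the limit along the ray.

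The real difference is how you handle the powers $c^j$.

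\emph{The paper's route.} It fixes $j$ and expands $(y+nc)^j=\sum_i\binom{j}{i}n^iy^{j-i}c^i$. It records the finitely many free coordinates $E$ that occur, and lets $(y+nc)^j/n^j\to c^j$ inside the closed cone $P\times\rr_{\ge 0}^E$. So the limit argument is run separately for each exponent.

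\emph{Your route.} You run the limit once, for $j=1$ only, to put $c$ into $T^{\mathrm{sat}}$. You then get every power at once from the identification $T^{\mathrm{sat}}=\{g : mg\in T \text{ for some } m\in\nn\}$. This set is visibly closed under multiplication in the ring $\gp_+(T)$ because $T$ is.

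\emph{What each buys.} Your version isolates a slightly stronger and reusable fact: $D\,T^{\mathrm{sat}}\subseteq T$ for the whole saturation, which is itself a subsemiring. It also avoids the binomial bookkeeping. The paper's version never introduces the saturation, so it does not need to check that the cone description and the ``some multiple lies in $T$'' description agree. The cost is repeating the cone limit for every $j$. Both arguments rely on the multiplicative closure of $T$ at exactly one point: yours through $(mg)(m'h)\in T$, the paper's through $(y+nc)^j\in T$.
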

\begin{proof}
        We identify $\gp_+(T)$ with the ring $T - T$ in the ambient field. Let $\pi_T \colon \zz^{(\A_+(T))} \to \gp_+(T)$ be the evaluation homomorphism, and let $K_T$ be its kernel. Choose a finite generating set of $K_T$, and let $B$ be the union of the supports of its elements. If $K_T = 0$, choose the empty generating set, so $B = \varnothing$. Put $C := \A_+(T) \setminus B$. Let $M_B$ be the additive monoid generated by $B$, and put $L := \gp(M_B)$. The group $L$ is finitely generated and torsion-free, so it is a lattice in $L \otimes_{\zz} \rr$. Since every relation is supported on $B$, evaluation gives the identification
    \[
        (T, +) \cong M_B \oplus \nn_0^{(C)}.
    \]
    Indeed, equal expressions have the same $C$-coordinates and equal evaluations of their $B$-parts. Passing to Grothendieck groups gives
    \[
            \gp_+(T) \cong L \oplus \zz^{(C)}.
    \]
    Let $P := \cone(B) \subseteq L \otimes_{\zz} \rr$. Here the cone consists of all nonnegative real linear combinations of the vectors in $B$; since $B$ is finite, this cone is closed. There is an integer $D \ge 1$ such that
    \begin{equation}\label{eq:uniform-multiplier}
        D(P \cap L) \subseteq M_B.
    \end{equation}

    If $B = \varnothing$ then $P = L = M_B = \{0\}$, so we may take $D = 1$. Otherwise, take a nonzero element of $P$ and choose a nonnegative real expression for it with minimal support $J \subseteq B$. If $J$ were linearly dependent, one could subtract a suitable multiple of a dependence from the coefficients, keeping them nonnegative and making at least one of them zero. This would contradict minimality. Hence $J$ is linearly independent. The subgroup $\zz J$ has finite index
    \[
        d_J := [L \cap \operatorname{span}_{\rr}(J):\zz J]
    \]
    in the lattice in its real span. For a lattice point expressed on $J$, multiplication by $d_J$ makes its unique $J$-coordinates integral and nonnegative. Taking $D$ to be the least common multiple of $d_J$ over the nonempty linearly independent subsets $J \subseteq B$ proves~\eqref{eq:uniform-multiplier}; the zero point requires no separate argument. Thus, $D$ is independent of $c$, $y$, and the exponent $j$.

        Fix $j \in \nn$. We use the direct-sum identifications above to regard elements of $\gp_+(T)$ as coordinate vectors. Since $\gp_+(T)$ is a ring, the coefficients in
    \[
        (y + nc)^j = \sum_{i = 0}^j \binom{j}{i} n^i y^{j - i}c^i
    \]
        belong to $\gp_+(T)$. Only finitely many $C$-coordinates occur among these coefficients; choose a finite set $E \subseteq C$ containing all of them. Every $(y + nc)^j$ lies in the closed cone $P \times \rr_{\ge 0}^{E}$ in $(L \otimes_{\zz} \rr) \oplus \rr^E$. Divide the displayed polynomial identity by $n^j$ for $n \ge 1$ and let $n$ tend to infinity. Its limit is $c^j$, so $c^j$ belongs to that cone. Its $L$-component is a lattice point of $P$, and its $E$-coordinates are nonnegative integers. Equation~\eqref{eq:uniform-multiplier} gives $Dc^j \in M_B \oplus \nn_0^{(C)} = T$. This limit is taken in the displayed finite-dimensional real coordinate space and requires no ordering of the ambient field.
\end{proof}

We return to the standing semidomain $S$. Its relation group $K = \zz\rho$ is finitely generated, so Lemma~\ref{lem:cone} applies. The next lemma constructs the ray to which we will apply it.

\begin{lemma}[A ray from a divisibility equation]\label{lem:ray}
    Let $r \in \A_\times(S)$, and suppose that $m + ry = rz$ for $m, y, z \in S$ with $m \ne 0$. Then $y + Nm/r \in S$ for every $N \in \nn$.
\end{lemma}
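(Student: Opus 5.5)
The plan is to reduce the statement to showing that the additive factorization coordinates of $y+Nw$ can be made nonnegative, where $w:=m/r$, and to extract the required sign information from the cyclic relation group \eqref{eq:cyclic-kernel} together with primality of $r$ in the factorial monoid $S^*$. I have not checked that Steps 2 and 3 below go through; the propagation argument in Step 2 is the main gap.

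\emph{Step 1 (setup).} First observe that
\[
w = \frac{m}{r} = z - y \in S - S = \gp_+(S).
\]
Hence $y+Nw$ lies in $\gp_+(S)$ for every $N$. The case $N=1$ is immediate, since $y+w=z\in S$. Multiplying by $r$ gives
\[
r(y+Nw)=ry+Nm=rz+(N-1)m .
\]
So the statement is equivalent to the following claim: for every $N\in\nn$, the element $rz+(N-1)m$ of $S$ is divisible by $r$ in $S^*$, with quotient in $S$. I will prove this by induction on $N$. The inductive step needs: if $m+ru=rv$ with $u,v\in S$, then $m+rv\in rS$. Applying it with $u=y+(N-1)w$ and $v=y+Nw$ gives $y+(N+1)w\in S$.

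\emph{Step 2 (factorization vectors).} Choose factorizations $Y,Z\in\F$ of $y$ and $z$. Then $Z-Y$ is a vector in $\zz^{(\A_+(S))}$ with $\pi(Z-Y)=w$. For any $k_N\in\zz$ we have $\pi(Y+N(Z-Y)+k_N\rho)=y+Nw$. Thus $y+Nw\in S$ as soon as some $k_N$ makes the vector
\[
X_N := Y+N(Z-Y)+k_N\rho
\]
lie in $\F$. The idea is to use the relation $m+ry=rz$ to control the negative coordinates of $Z-Y$, so that they are dominated by multiples of $\rho$ along the whole ray, i.e.\ to show that $(Z-Y)^-$ lies in the cone generated by $\rho$ and the nonnegative orthant. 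To get this, I would multiply factorizations through by $r$. Factor each $ra$ with $a\in\supp(Y)\cup\supp(Z)$ additively. Comparing the resulting two factorizations of $rz$, namely $r\cdot Z$ and $F(m)+r\cdot Y$ with $F(m)$ a factorization of $m$, yields an identity
\[
r\cdot Z-r\cdot Y-F(m)=h\rho\qquad (h\in\zz).
\]
The sign of $h$ is determined by comparing lengths through $\lambda$, using $\lambda(\rho)\ne0$. Lemma~\ref{lem:basic}(\ref{item:basic-divisors}) and primality of $r$ in $S^*$ should then show that the atoms carrying negative coordinates of $Z-Y$ do not pick up new negative mass as $N$ grows.

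\emph{Step 3 (the obstacle).} The hard part is the sign bookkeeping in Step 2: showing that a single choice $k_N=Nk_1$, or $k_N$ linear in $N$, keeps all coordinates of $X_N$ nonnegative. Here $\rho$ has both positive and negative coordinates, so adding multiples of $\rho$ does not automatically repair negativity. If the direct coordinate argument fails, the fallback is the primality route. Since $r\in\A_\times(S)$ is prime in the factorial monoid $S^*$ and $r$ divides $m+ru$ with quotient $v$, the goal is to show that $r$ also divides $m+rv=2m+ru$ in $S$. For this I would compare the unique multiplicative factorizations of $rv$ and of $(m+ru)$ after multiplying through by suitable powers of $r$, so as to transfer divisibility one step along the ray.
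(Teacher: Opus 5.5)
There is a genuine gap: the proposal never proves the one divisibility fact the lemma needs, namely $r\mid_{S^*}Nm+ry$. Step~2 is aimed at the wrong target. The relation $r\cdot Z-r\cdot Y-F(m)=h\rho$ only re-encodes $m+ry=rz$, and you give no mechanism by which the primality of $r$ enters. The sign of $h$ is also not fixed by $\lambda$, because the lengths of the chosen factorizations of the products $ra$ are uncontrolled.

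Worse, requiring $k_N$ to be affine in $N$ forces $Z-Y+k_1\rho\in\F$, since otherwise some coordinate of $X_N$ eventually becomes negative. That amounts to proving $m/r\in S$ outright. This is the conclusion of Lemma~\ref{lem:cancel}, which the paper reaches only by feeding the present lemma into the uniform multiplier of Lemma~\ref{lem:cone}.

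Your fallback points toward primality, but it does not work as stated. The elements $rv$ and $m+ru$ are equal, so comparing their factorizations gives nothing. Multiplying by powers of $r$ makes divisibility by $r$ trivial and leaves primality nothing to act on. The missing idea is to raise the \emph{sum} to a power.

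Your induction reduction in Step~1 is correct, and its inductive step then follows at once. Using $m=rv-ru$ in one factor of $4m^2$ gives
\[
(2m+ru)^2=4m(rv-ru)+4mru+r^2u^2=r\,(4mv+ru^2),
\]
with $4mv+ru^2\in S$. This element is nonzero because $2m+ru\ne 0$ by additive reducedness. Hence $r\mid_{S^*}(2m+ru)^2$. Since $r$ is prime in the factorial monoid $S^*$, we get $r\mid_{S^*}2m+ru=m+rv$, so $v+m/r\in S$.

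The paper does the same thing without induction. Put $u_N:=Nm+ry$ and take $L\ge N$. Substitute $m=rz-ry$ into one factor of the leading term $N^Lm^L$ of the binomial expansion of $u_N^L$. This produces a negative term $-rN^Lm^{L-1}y$, which is absorbed by the linear term $LN^{L-1}m^{L-1}ry$. So $u_N^L$ is $r$ times a nonzero element of $S$, and primality gives $r\mid_{S^*}u_N$. No factorization vectors or sign bookkeeping with $\rho$ are needed.
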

\begin{proof}
    Fix $N \in \nn$, choose an integer $L \ge N$, and put $u_N := Nm + ry$. Using $m + ry = rz$ in the first two terms of the binomial expansion gives
    \begin{align*}
        u_N^L = r\bigg(&N^L m^{L - 1}z
        + N^{L - 1}(L - N)m^{L - 1}y\\
        & + \sum_{i = 2}^L \binom{L}{i} N^{L - i}m^{L - i}r^{i - 1}y^i\bigg).
    \end{align*}
    All coefficients inside the parentheses are nonnegative integers, and $u_N \ne 0$ by additive reducedness. Thus, $r \mid_{S^*} u_N^L$. Since $S^*$ is factorial, $r$ is prime, and hence $r \mid_{S^*} u_N$. Dividing by $r$ in the ambient field gives $y + Nm/r \in S$.
\end{proof}

The uniform multiplier now permits cancellation of divisibility by a multiplicative atom. Factoring an arbitrary nonzero divisor extends the conclusion to all of $S^*$.

\begin{lemma}[Cancellation of divisibility across a sum]\label{lem:cancel}
    If $d \in S^*$ and $m + dy = dz$ with $m, y, z \in S$ and $m \ne 0$ then $m/d \in S$.
\end{lemma}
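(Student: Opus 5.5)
The plan is to treat first the case where $d = r$ is a multiplicative atom, and then induct on the length of the (unique) multiplicative factorization of $d$. Recall that $S^\times = \{1\}$ by Lemma~\ref{lem:basic}(\ref{item:basic-units}), so every $d \in S^* \setminus \{1\}$ is a product of multiplicative atoms, and each of these is prime because $S^*$ is factorial.

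\emph{Atom case.} Suppose $m + ry = rz$ with $r \in \A_\times(S)$ and $m \ne 0$.
\begin{enumerate}[label=(\roman*)]
    \item Put $c := m/r$. From $m = r(z - y)$ we get $c = z - y \in S - S = \gp_+(S)$.
    \item Lemma~\ref{lem:ray} gives $y + Nc \in S$ for every $N \in \nn$. The case $N = 0$ is $y \in S$ itself, so $y + nc \in S$ for every $n \in \nn_0$.
    \item The relation group $K = \zz\rho$ is finitely generated and $(S, +)$ is atomic. Hence Lemma~\ref{lem:cone} applies to $T = S$ and produces a fixed $D \in \nn$ with $Dc^j \in S$ for every $j \in \nn$.
    \item Rewriting, $Dm^j = r^j \cdot (Dc^j)$, so $r^j \mid_{S^*} Dm^j$ for every $j$.
    \item Suppose $r \nmid_{S^*} m$. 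Since $r$ is prime, $r$ is coprime to $m^j$ in the factorial monoid $S^*$. Euclid's lemma (in the form recalled in Section~\ref{sec:background}) then gives $r^j \mid_{S^*} D$ for every $j$.
    \item This is impossible: $D$ is a fixed nonzero element, so the multiplicity of $r$ in its unique factorization is finite. Therefore $r \mid_{S^*} m$, that is, $m/r \in S$.
\end{enumerate}

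\emph{Induction.} If $d = 1$ there is nothing to prove. Otherwise write $d = rd'$ with $r \in \A_\times(S)$ and $d'$ of smaller length.
\begin{enumerate}[label=(\roman*)]
    \item The equation becomes $m + r(d'y) = r(d'z)$. The atom case gives $m' := m/r \in S$, and $m' \ne 0$.
    \item Substituting $m = rm'$ and cancelling $r$ in the ambient field yields $m' + d'y = d'z$.
    \item The induction hypothesis gives $m'/d' = m/d \in S$.
\end{enumerate}

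\emph{Main obstacle.} The substantive step is passing from the ray condition to actual divisibility, that is, steps (iii)--(vi) of the atom case. The uniform multiplier $D$ from Lemma~\ref{lem:cone} is only useful because it is independent of $j$. That independence lets the finiteness of the multiplicity of $r$ in $D$ contradict $r^j \mid D$ for all $j$. One point must be checked: Lemma~\ref{lem:cone} requires $c \in \gp_+(S)$ and the ray condition for all $n \in \nn_0$, and both are supplied by steps (i) and (ii).
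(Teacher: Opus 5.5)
Your proposal is correct and follows essentially the same route as the paper. It handles the atom case via Lemma~\ref{lem:ray} and the $j$-independent multiplier $D$ from Lemma~\ref{lem:cone}, contradicting the finite multiplicity of $r$ in $D$, and then inducts on the multiplicative factorization length of $d$ by dividing out one atom at a time. Your explicit appeal to Euclid's lemma, using that $\gcd(r^j, m^j)$ is a unit when $r \nmid m$, just spells out the paper's ``unique factorization would imply $r^j \mid_{S^*} D$.''
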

\begin{proof}
        First suppose that $d = r \in \A_\times(S)$. Put $c := m/r = z - y \in \gp_+(S)$. Lemma~\ref{lem:ray} gives $y + Nc \in S$ for every $N \in \nn_0$. By Lemma~\ref{lem:cone}, $Dc^j \in S^*$ for all $j \in \nn$. Consequently,
    \[
        r^j(Dc^j) = Dm^j,
    \]
    so $r^j \mid_{S^*} Dm^j$ for every $j$. If $r \nmid_{S^*} m$ then unique factorization would imply $r^j \mid_{S^*} D$ for every $j$. The exponent of $r$ in the factorization of the fixed nonzero element $D$ would therefore be at least every positive integer, which is impossible. Hence $r \mid_{S^*} m$. The independence of $D$ from $j$ is essential here.

    For general $d \in S^*$, we argue by induction on its factorization length. If $d$ is a unit then $m/d \in S^*$. Otherwise, write $d = re$ with $r \in \A_\times(S)$ and $e \in S^*$. Applying the atomic case to $m + r(ey) = r(ez)$ gives $m/r \in S^*$. Dividing the original equality by $r$ gives $m/r + ey = ez$. The induction hypothesis applied to $e$ now gives $m/d \in S$.
\end{proof}

The following consequences exclude the divisibility equations needed in the rest of the argument.

\begin{corollary}\label{cor:forbidden}
    The following statements hold.
    \begin{enumerate}[label=(\arabic*),ref=\arabic*]
        \item\label{item:forbidden-unit} For every multiplicative nonunit $d \in S^*$, there are no $y, z \in S$ with $1 + dy = dz$.
        \smallskip

            \item\label{item:forbidden-atom} For every $d \in S^*$ and $\beta \in \A_+(S)$, there are no $m, y \in S^*$ with $m + dy = d\beta$.
    \end{enumerate}
\end{corollary}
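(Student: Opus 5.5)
Both parts follow by feeding the hypothetical equation into Lemma~\ref{lem:cancel} and then deriving a contradiction from the conclusion $m/d \in S$, together with Lemma~\ref{lem:basic}.

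For (\ref{item:forbidden-unit}), suppose $1 + dy = dz$ with $d$ a multiplicative nonunit and $y, z \in S$. Lemma~\ref{lem:cancel} applies with $m = 1 \ne 0$ and gives $1/d \in S$. Since $d \in S^*$, this makes $d$ a multiplicative unit, contradicting the choice of $d$. (Equivalently, $S^\times = \{1\}$ by Lemma~\ref{lem:basic}(\ref{item:basic-units}), while $d \ne 1$.)

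For (\ref{item:forbidden-atom}), suppose $m + dy = d\beta$ with $m, y \in S^*$, $d \in S^*$ and $\beta \in \A_+(S)$. Lemma~\ref{lem:cancel} gives $m' := m/d \in S$, and $m' \ne 0$ because $m \ne 0$. Dividing the equation by $d$ in the ambient field yields
\[
    \beta = m' + y,
\]
a decomposition of $\beta$ as a sum of two nonzero elements of $S$. Since $S$ is additively reduced, this contradicts $\beta \in \A_+(S)$.

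There is no real obstacle here; all the work was done in Lemma~\ref{lem:cancel}. The only points to check are that its hypotheses hold ($m \ne 0$ and $d \in S^*$ in both cases) and that nonzeroness survives division by $d$. Part (\ref{item:forbidden-atom}) needs $y \ne 0$; otherwise $m = d\beta$ and nothing is forbidden.
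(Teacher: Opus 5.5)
Your proposal is correct and follows the paper's proof: both parts feed the hypothetical equation into Lemma~\ref{lem:cancel}. In (\ref{item:forbidden-unit}) the conclusion $1/d \in S$ makes $d$ a unit, and in (\ref{item:forbidden-atom}) the conclusion $m/d \in S^*$ gives the forbidden decomposition $\beta = m/d + y$.
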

\begin{proof}
    (\ref{item:forbidden-unit}) Lemma~\ref{lem:cancel} would give $1/d \in S$, contradicting that $d$ is a nonunit.
    \smallskip

    (\ref{item:forbidden-atom}) Lemma~\ref{lem:cancel} would give $m/d \in S^*$ and hence $\beta = m/d + y$, contradicting the additive irreducibility of $\beta$.
\end{proof}
Assertion~(\ref{item:forbidden-unit}) will exclude nonunit common divisors in the next lemma. Assertion~(\ref{item:forbidden-atom}) will constrain the additive factorizations used in the proof of Theorem~\ref{thm:upgrade}.

\medskip
\subsection{The Polynomial Identity and the Signs of the Relation}

We now combine a polynomial identity with cancellation of divisibility to determine the signs of the coordinates of $\rho$ corresponding to $1$ and to irrational common atoms.

\begin{lemma}\label{lem:polynomial}
    For each irrational common atom $p \in \A_+(S) \cap \A_\times(S)$, we have $p^2 - p + 1 \in S$ and $p^2 - p \notin S$.
\end{lemma}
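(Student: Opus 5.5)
The two assertions follow from different earlier results. Membership of $p^2 - p + 1$ comes from Lemma~\ref{lem:cancel} applied to the factorization $p^3 + 1 = (p+1)(p^2 - p + 1)$. Non-membership of $p^2 - p$ comes from Corollary~\ref{cor:forbidden}(\ref{item:forbidden-atom}), using that $p$ is an additive atom.

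\emph{First assertion.} Put $d := p + 1$, which lies in $S^*$ because $S$ is additively reduced. Put $m := p^3 + 1$, which is nonzero for the same reason. Take $y := p$ and $z := p^2 + 1$; all of these lie in $S$. A direct expansion in the ambient field gives
\[
    (p^3 + 1) + (p+1)p = p^3 + p^2 + p + 1 = (p+1)(p^2 + 1),
\]
so $m + dy = dz$. Lemma~\ref{lem:cancel} therefore gives $m/d \in S$. Since $m/d = (p^3+1)/(p+1) = p^2 - p + 1$ in the ambient field, we get $p^2 - p + 1 \in S$. The only thing to check here is that $d$ and $m$ are nonzero, which additive reducedness guarantees. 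This part uses neither the irrationality of $p$ nor its multiplicative irreducibility.

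\emph{Second assertion.} Suppose, to the contrary, that $w := p^2 - p \in S$. Since $p$ is irrational, $p \ne 0, 1$, so $w = p(p-1) \ne 0$ and $w \in S^*$. The identity $w + p \cdot 1 = p \cdot p$ has the form $m + dy = d\beta$ with $m = w \in S^*$, $d = p \in S^*$, $y = 1 \in S^*$, and $\beta = p \in \A_+(S)$. This is exactly the configuration excluded by Corollary~\ref{cor:forbidden}(\ref{item:forbidden-atom}), a contradiction. Hence $p^2 - p \notin S$. Here we use only that $p$ is an additive atom and that $p \neq 0,1$.

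The only real obstacle is finding a divisibility equation whose quotient $m/d$ is $p^2 - p + 1$ and whose remaining terms visibly lie in $S$. The choice $d = p+1$, $y = p$, $z = p^2+1$ works because adding $(p+1)p$ to $p^3 + 1$ completes it to the product $(p+1)(p^2+1)$, which avoids any negative coefficients. Once this identity is found, both assertions follow in one line each from the cancellation results already established.
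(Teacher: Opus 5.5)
Your proof is correct. The second assertion is argued exactly as in the paper, via $(p^2 - p) + p \cdot 1 = p \cdot p$ and Corollary~\ref{cor:forbidden}(\ref{item:forbidden-atom}). Your first assertion, however, takes a different and shorter route.

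The paper starts from the auxiliary identity $(1 + p + p^2)(1 + p^3) = (1 + p)(1 + p^2 + p^4)$. It uses Corollary~\ref{cor:forbidden}(\ref{item:forbidden-unit}) to show that no multiplicative atom $q$ divides both $1 + p$ and $1 + p + p^2$, since writing $1 + p + p^2 = 1 + p(1 + p)$ would give $1 + q(pw) = qv$. It then applies Euclid's lemma in the factorial monoid $S^*$ to conclude $1 + p \mid_{S^*} 1 + p^3$. You instead feed $d = p + 1$, $m = p^3 + 1$, $y = p$, $z = p^2 + 1$ directly into Lemma~\ref{lem:cancel}, and additive reducedness supplies all the nonvanishing conditions.

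\emph{What your route buys:} the argument becomes a single instance of the observation that Lemma~\ref{lem:cancel} says exactly $S^* \cap d\,\gp_+(S) \subseteq dS$, with $p^2 - p + 1 = (p^2 + 1) - p \in \gp_+(S)$. \emph{What it costs:} you need the lemma for the divisor $p + 1$, which need not be a multiplicative atom. So you rely on the induction over multiplicative factorizations in the proof of Lemma~\ref{lem:cancel}. The paper's argument only invokes cancellation with atomic divisors ($d = q$ in the first part, $d = p$ in the second). It handles the possibly composite divisor $1 + p$ through the gcd computation and Euclid's lemma instead. Since Lemma~\ref{lem:cancel} is proved for arbitrary $d \in S^*$, your shortcut is fully legitimate.
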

\begin{proof}
    Consider the identity
    \[
        (1 + p + p^2)(1 + p^3) = (1 + p)(1 + p^2 + p^4).
    \]
    Let $u$ be a greatest common divisor of $1 + p + p^2$ and $1 + p$ in the factorial monoid $S^*$. If a multiplicative atom $q$ divides $u$ then $q \mid_{S^*} 1 + p$ and $q \mid_{S^*} 1 + p + p^2$. Writing $1 + p = qw$ and $1 + p + p^2 = qv$, with $v, w \in S^*$, the identity $1 + p + p^2 = 1 + p(1 + p)$ becomes $1 + q(pw) = qv$. This contradicts Corollary~\ref{cor:forbidden}(\ref{item:forbidden-unit}). Thus, $u$ is a unit, and Euclid's lemma applied to the displayed product gives $1 + p \mid_{S^*} 1 + p^3$. Therefore
    \[
        p^2 - p + 1 = \frac{1 + p^3}{1 + p} \in S.
    \]

    If $s := p^2 - p$ belonged to $S$ then $s \ne 0$ because $p$ is irrational. The equation $s + p = p^2$ would contradict Corollary~\ref{cor:forbidden}(\ref{item:forbidden-atom}), with $d = \beta = p$ and $y = 1$.
\end{proof}

We now use these membership and nonmembership assertions to choose an orientation of $\rho$ that works for every irrational common atom.

\begin{lemma}[Orientation of the relation]\label{lem:sign}
    We may orient $\rho$ so that $\rho_1 < 0$. With this orientation, $\rho_p > 0$ for every irrational common atom $p$.
\end{lemma}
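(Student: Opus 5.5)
The plan is to extract a single relation from the identity $(p^2 - p + 1) + p = p^2 + 1$ and read off the signs of two of its coordinates using Lemma~\ref{lem:polynomial}. Fix an irrational common atom $p$, which exists by Lemma~\ref{lem:common}. By Lemma~\ref{lem:polynomial}, $p^2 - p + 1 \in S$, so I choose an additive factorization $X \in \F$ of $p^2 - p + 1$. I also choose an additive factorization $Y \in \F$ of $p^2 \in S$. Then $X + e_p$ and $Y + e_1$ are both factorizations of $p^2 + 1$. Hence
\[
    X + e_p - Y - e_1 = k\rho
\]
for some $k \in \zz$, by \eqref{eq:cyclic-kernel}.

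The key observation is that the nonmembership $p^2 - p \notin S$ from Lemma~\ref{lem:polynomial} kills two coordinates.
\begin{enumerate}[label=(\roman*)]
    \item If $Y_p \ge 1$, then $Y - e_p \in \F$ evaluates to $p^2 - p$, which is impossible. So $Y_p = 0$.
    \item If $X_1 \ge 1$, then $X - e_1$ evaluates to $p^2 - p$, again impossible. So $X_1 = 0$.
\end{enumerate}
Since $p \ne 1$ (because $p$ is irrational), the coordinates of the displayed relation at $1$ and at $p$ are
\[
    k\rho_1 = X_1 - Y_1 - 1 = -Y_1 - 1 < 0,
    \qquad
    k\rho_p = X_p + 1 - Y_p = X_p + 1 > 0.
\]

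In particular $k \ne 0$ and $\rho_1 \ne 0$. This already justifies the first assertion: replacing $\rho$ by $-\rho$ if necessary, we may assume $\rho_1 < 0$. This orientation depends only on $\rho$ and not on $p$. With it fixed, the first displayed inequality forces $k > 0$ for every irrational common atom $p$. The second inequality then gives $\rho_p > 0$.

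The main point requiring care is that the signs must come out independent of the arbitrary choices of $X$ and $Y$. The proof avoids this issue because (i) and (ii) hold for every factorization, not just for particular ones. The only real input is Lemma~\ref{lem:polynomial}, together with Lemma~\ref{lem:common}, which guarantees that at least one irrational common atom exists and hence that $\rho_1 \ne 0$.
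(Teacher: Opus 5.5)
Your proof is correct and matches the paper's argument: factor $p^2-p+1$ and $p^2$, use $p^2-p\notin S$ from Lemma~\ref{lem:polynomial} to force $X_1=0$ and $Y_p=0$, and then read off opposite signs at the coordinates $1$ and $p$ of the relation coming from $(p^2-p+1)+p=1+p^2$. You also spell out that $k\neq 0$, and hence $\rho_1\neq 0$, which the paper leaves implicit.
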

\begin{proof}
    Choose an irrational common atom $p$ by Lemma~\ref{lem:common}. Let $X$ and $P_2$ be additive factorizations of $p^2 - p + 1$ and $p^2$, respectively. By Lemma~\ref{lem:polynomial}, $X_1 = 0$ and $(P_2)_p = 0$. Indeed, removing a copy of $1$ from $X$, or a copy of $p$ from $P_2$, would give an element $p^2 - p$ of $S$. The identity $(p^2 - p + 1) + p = 1 + p^2$ yields
    \[
        X + e_p - e_1 - P_2 \in K.
    \]
    This relation has a negative $1$-coordinate and a positive $p$-coordinate. Orienting $\rho$ by $\rho_1 < 0$ therefore gives $\rho_p > 0$. The same argument works for every irrational common atom with this fixed orientation.
\end{proof}

\medskip
\subsection{The Integer-Divisor Obstruction}

The following lemma provides the integer-divisor obstruction needed to complete the proof of Theorem~\ref{thm:upgrade}. Its proof uses the signs established in Lemma~\ref{lem:sign}.

\begin{lemma}[Integer-divisor obstruction]\label{lem:integer-obstruction}
        Suppose that $a \in \A_+(S)$ is irrational and $\rho_1 < 0 < \rho_a$. Then $a$ does not divide a positive integer in $S^*$.
\end{lemma}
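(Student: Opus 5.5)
We argue by contradiction, in the spirit of the proof of Lemma~\ref{lem:basic}(\ref{item:basic-integers}). Suppose that $ab = n$ for some $b \in S^*$ and $n \in \nn$.

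\emph{First reduction: we may take $b$ to be an additive atom.} Choose an additive factorization $Y$ of $b$. For each $c \in \supp(Y)$, choose an additive factorization $Z_c$ of $ac$. Then $\sum_c Y_c Z_c$ is an additive factorization of $ab = n$. By Lemma~\ref{lem:basic}(\ref{item:basic-integers}) it must equal $ne_1$. Hence every $Z_c$ is a multiple of $e_1$, so $ac = t_c \in \nn$ for each $c \in \supp(Y)$. Replacing $b$ by one such $c$, we may assume that $b \in \A_+(S)$ and $ab = t \in \nn$. Since $a$ is irrational, $b = t/a$ is irrational, so $b \ne 1$. If $b = a$, then $a^2 = t$ and we keep this case in mind separately.

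\emph{Main step: multiply the relation by $b$.} Write $\rho = \rho^+ - \rho^-$, so that $\pi(\rho^+) = \pi(\rho^-)$, with $a \in \supp(\rho^+)$ and $1 \in \supp(\rho^-)$. Multiplying by $b$ gives
\[
    \rho_a t + \sum_{c \ne a} \rho^+_c\, bc = |\rho_1|\, b + \sum_{c \ne 1} \rho^-_c\, bc .
\]
Fix an additive factorization $W_c$ of $bc$ for every atom $c \in \supp(\rho)$, and take $W_a := te_1$ and $W_1 := e_b$. This yields two additive factorizations of the same element, namely
\[
    F_L := \rho_a t e_1 + \sum_{c \ne a}\rho^+_c W_c, \qquad F_R := |\rho_1| e_b + \sum_{c\ne 1}\rho^-_c W_c,
\]
and so $F_L - F_R = k\rho$ for some $k \in \zz$. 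The idea is to read off the sign of $k$ from two coordinates, as in Lemma~\ref{lem:basic}(\ref{item:basic-integers}). At coordinate $1$ we have $\rho_1 < 0$. At coordinate $a$ (or at a suitable coordinate of $\supp(\rho^+)$) we have $\rho_a > 0$. The aim is to show that both $k \ge 0$ and $k \le 0$, so that $k = 0$ and hence $F_L = F_R$. The $1$-coordinate of $F_L$ is at least $\rho_a t > 0$. Therefore $F_L = F_R$ forces some $W_c$ with $c \in \supp(\rho^-)$, $c \ne 1$, to contain $1$. That is, $bc = 1 + (\text{element of } S)$ for an atom $c$. Applying Corollary~\ref{cor:forbidden}, via Lemma~\ref{lem:cancel} with $d = b$ or $d = a$ (using $a \cdot bc = tc$), should contradict the additive irreducibility of $c$ or the fact that $b$ is a nonunit.

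\emph{Expected obstacle.} The hard part is controlling the sign of $k$. Unlike in Lemma~\ref{lem:basic}(\ref{item:basic-integers}), the factorizations $W_c$ may themselves involve $1$ and $a$, so the coordinate comparison is not immediate. I would first try to show directly that no $W_c$ with $c \ne a$ contains $1$. The obstruction for this is an identity $bc = 1 + w$. Multiplying it by $a$ gives $tc = a + aw$, and therefore $t c = a(1 + w)$ with $c \in \A_+(S)$. The plan is to combine this with Lemma~\ref{lem:cancel} and the integer-factorization rigidity of Lemma~\ref{lem:basic}(\ref{item:basic-integers}) to exclude it. Once $1$ is excluded from these $W_c$, the $1$-coordinate forces $k \ge \rho_a t/|\rho_1| > 0$. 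A positive $k$ is then incompatible with the coordinate at $b$ or at the negative support, which yields the contradiction.
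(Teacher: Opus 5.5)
There is a genuine gap, and it sits exactly where you flag the ``expected obstacle.'' Your reduction to $b \in \A_+(S)$ with $ab = t \in \nn$ is fine, and so is the construction of $F_L - F_R = k\rho$. The problem is that nothing in your argument controls $k$.

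Corollary~\ref{cor:forbidden} does not exclude $bc = 1 + w$. Part (1) would need $w \in bS$, and part (2) would need $w$ to split off a nonzero summand in $bS$. The identity $tc = a(1+w)$ carries no information, since it is just $a \cdot bc = tc$ and $a \mid_{S^*} tc$ holds automatically.

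The missing idea is a case split on the supports of $\rho$.
\begin{itemize}
\item If the positive support is $\{a\}$, then $F_L = \rho_a t e_1$ factors an integer while $F_R$ contains $e_b$ with $b \neq 1$. This contradicts Lemma~\ref{lem:basic}(\ref{item:basic-integers}), so your setup settles this case at once.
\item Otherwise both supports have at least two atoms; the negative support cannot be $\{1\}$, again by Lemma~\ref{lem:basic}(\ref{item:basic-integers}). Since any two factorizations of an element differ by a multiple of $\rho$, each $m\beta$ with $\beta \in \A_+(S)$ then has only the factorization $me_\beta$. This is what excludes $bc = 1 + w$ for $c \neq a$: otherwise $tc = a + aw$ would have a factorization containing $e_a$.
\end{itemize}

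Even granting this, your final step fails. With $1$ excluded from the $W_c$ for $c \neq a$, coordinate $1$ gives $k\rho_1 = \rho_a t$, so $k < 0$, not $k > 0$. A negative $k$ is not contradicted by any single coordinate. Indeed, once the lift $V$ of multiplication by $b$ is known to be monomial, one finds $Ve_a = te_1$, $Ve_1 = e_b$, $V\rho = k\rho$ with $k < 0$, and $\rho_b = \rho_1/k > 0$, all sign-consistent.

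The contradiction has to come from evaluation. The paper's argument runs as follows.
\begin{itemize}
\item Lift multiplication by $a$ and $b$ to maps $U, V$ with nonnegative columns.
\item Uniqueness of the factorizations of $m\beta$ gives $UV = VU = tI$ (in your notation).
\item This forces $U$ to be monomial: $Ue_\beta = c_\beta e_{\sigma(\beta)}$ for a permutation $\sigma$.
\item Then $U\rho = k\rho$, and following a $\sigma$-cycle in $\supp(\rho)$ gives $a^h = k^h$, hence $a = |k| \in \nn$.
\end{itemize}
Once monomiality is known, a shorter finish is also available: $U\rho^+$ equals $k\rho^+$ or $|k|\rho^-$ according to the sign of $k$, and evaluating gives $a\pi(\rho^+) = |k|\pi(\rho^+)$. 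Without the uniqueness of factorizations of $m\beta$ and some such global argument about the lifted multiplication maps, your coordinate comparisons cannot close the proof.
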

\begin{proof}
    Suppose that $ab = n$ for some $b \in S^*$ and $n \in \nn$. If the positive support of $\rho$ is $\{a\}$ then its evaluated relation has the form
    \[
        ca = \sum_{\beta : \rho_\beta < 0}v_\beta\beta
    \]
    for some $c, v_\beta \in \nn$ with $v_1 > 0$. Multiplication by $b$ gives an expression for the integer $cn$ as a sum of the elements $v_\beta b\beta$. By Lemma~\ref{lem:basic}(\ref{item:basic-integers}), every additive factorization of every $b\beta$ appearing here uses only $1$. In particular, $b \in \nn$. Then $a = n/b$ is rational, a contradiction.

        Assume that the positive support has at least two elements. The negative support also has at least two elements. Indeed, it contains $1$; if it were $\{1\}$ then the relation would give a positive integer an additive factorization different from the one consisting only of copies of $1$, contrary to Lemma~\ref{lem:basic}(\ref{item:basic-integers}). It follows that for every $\beta \in \A_+(S)$ and $m \in \nn$, the unique additive factorization of $m\beta$ is $me_\beta$. To see this, write another factorization as $me_\beta + t\rho$. A positive coordinate of $\rho$ away from $\beta$ forces $t \ge 0$, and a negative coordinate away from $\beta$ forces $t \le 0$.

        Choose additive factorizations of $a\beta$ and $b\beta$ for each $\beta \in \A_+(S)$. They define homomorphisms $U, V \colon \zz^{(\A_+(S))} \to \zz^{(\A_+(S))}$ with nonnegative integer columns, lifting multiplication by $a$ and $b$, respectively. The uniqueness just proved gives
    \begin{equation}\label{eq:positive-inverses}
        UV = VU = nI.
    \end{equation}
        Since $\zz^{(\A_+(S))}$ is torsion-free, these identities imply that $U$ and $V$ are injective. We claim that
    \begin{equation}\label{eq:monomial-map}
        Ue_\beta = c_\beta e_{\sigma(\beta)}
    \end{equation}
        for every $\beta \in \A_+(S)$, where $c_\beta \in \nn$ and $\sigma$ is a permutation of $\A_+(S)$.

        To justify this assertion, choose $\gamma \in \supp(Ue_\beta)$. The identity $VUe_\beta = ne_\beta$ and nonnegativity imply $Ve_\gamma = d_\gamma e_\beta$ for some $d_\gamma \in \nn$. The coefficient is positive because $UV = nI$ ensures that $Ve_\gamma \ne 0$. Applying $U$ and using $UVe_\gamma = ne_\gamma$ shows that $d_\gamma Ue_\beta = ne_\gamma$, so $Ue_\beta$ has just one nonzero coordinate. The map $\sigma$ is injective: if two columns of $U$ had the same support then a nonzero integer linear combination of those columns would vanish, contrary to the injectivity of $U$. It is surjective because $UVe_\gamma = ne_\gamma$ for every $\gamma$, so every basis coordinate occurs in the image of some column of $U$. This argument works even when $\A_+(S)$ is infinite.

    Since $\pi U = a\pi$, the map $U$ preserves $K$. Thus, there is an integer $k \ne 0$ such that $U\rho = k\rho$; here $k \ne 0$ follows from the injectivity of $U$. By~\eqref{eq:monomial-map}, distinct coordinates are sent to distinct coordinates with positive coefficients. Consequently, $\supp(U\rho) = \sigma(\supp(\rho)) = \supp(\rho)$, so $\sigma$ permutes the finite set $\supp(\rho)$. Choose $\beta$ in that set and let $h \in \nn$ be the length of its $\sigma$-cycle. Comparing its coefficient in $U^h\rho = k^h\rho$ gives
    \[
        \prod_{i = 0}^{h - 1}c_{\sigma^i(\beta)} = k^h.
    \]
    On the other hand, evaluating $U^he_\beta$ gives $a^h\beta = k^h\beta$. Cancelling the nonzero element $\beta$ gives $a^h = k^h$. Squaring and factoring in the ambient field yield
    \[
        0 = a^{2h} - |k|^{2h} = (a - |k|)\sum_{i = 0}^{2h - 1}a^i|k|^{2h - 1 - i}.
    \]
    Here $|k|$ denotes the absolute value of the integer $k$. Every summand in the second factor is a nonzero element of $S$, so their sum is nonzero by additive reducedness. Hence $a = |k| \in \nn$, contradicting the irrationality of $a$.
\end{proof}

%%%%%%%%%%%%%%%%%%%%%%%%%%%%%%%%%%%%%%%%%%%%%%
\medskip
\subsection{Completion of the Additive Upgrade}

We conclude this section by proving Theorem~\ref{thm:upgrade}.

\begin{proof}[Proof of Theorem~\ref{thm:upgrade}]
    By the reduction at the start of Section~\ref{sec:upgrade}, it suffices to work under its standing assumptions. Choose an irrational common atom $a$. Orient $\rho$ as in Lemma~\ref{lem:sign}. Put $x := a^2 - a + 1 \in S$, so that
    \begin{equation}\label{eq:quadratic-identity}
        x + a = 1 + a^2.
    \end{equation}
        When splitting an additive factorization according to divisibility by $a$, each atom $\alpha \in aS$ can be written as $a\beta$ with $\beta \in \A_+(S)$, by Lemma~\ref{lem:basic}(\ref{item:basic-divisors}). Thus, the evaluated factorization can be written as $\pi(Z) + a\pi(Y)$, where $Z, Y \in \F$ and $Z$ is supported on atoms outside $aS$. This decomposition is an equality in $S$. In particular, this splitting uses evaluation in $S$ and does not presuppose a multiplication map on $\F$.

    \begin{claim}\label{claim:unique-products}
            For every $\beta \in \A_+(S)$, the element $a\beta$ has a unique additive factorization.
    \end{claim}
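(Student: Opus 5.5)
The plan is to compare two hypothetical distinct factorizations of $a\beta$ through the cyclic relation group $K = \zz\rho$ from~\eqref{eq:cyclic-kernel}. Since $\rho_a > 0$, one of them must contain a copy of $a$. This produces a divisibility equation of the form excluded by Corollary~\ref{cor:forbidden}(\ref{item:forbidden-atom}).

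First I dispose of the case $\beta = 1$. Then $a\beta = a$ is an additive atom. In a reduced atomic monoid an atom has only the factorization $e_a$, since any other factorization would write $a$ as a sum of at least two nonzero elements. So assume $\beta \ne 1$, and suppose $W, W' \in \F$ are distinct factorizations of $a\beta$.

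Put $G := \min(W, W')$ coordinatewise. Then $W - G$ and $W' - G$ lie in $\F$, have disjoint supports, and $W' - W \in K = \zz\rho$, so $W' - W = t\rho$ with $t \ne 0$. After swapping $W$ and $W'$ we may take $t > 0$. By uniqueness of the decomposition $z = z^+ - z^-$ with disjoint supports, we get $W' - G = t\rho^+$ and $W - G = t\rho^-$. With the orientation of Lemma~\ref{lem:sign}, $\rho_a > 0$, hence $W'_a \ge t\rho_a \ge 1$. So the factorization $W'$ of $a\beta$ contains a copy of $a$.

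Removing that copy and evaluating gives $a\beta = a + m$ with $m := \pi(W' - e_a) \in S$. If $m = 0$, cancelling $a$ in the ambient field gives $\beta = 1$, which we excluded. Hence $m \in S^*$, and the equality reads
\[
    m + a \cdot 1 = a\beta,
\]
with $d = a \in S^*$, $y = 1 \in S^*$, and $\beta \in \A_+(S)$. This is exactly what Corollary~\ref{cor:forbidden}(\ref{item:forbidden-atom}) forbids, which is the desired contradiction. Therefore $a\beta$ has a unique additive factorization.

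The only delicate point is extracting a copy of $a$ from one of the two factorizations rather than from their formal difference. The normalization by $G$ handles this, and it requires the orientation $\rho_a > 0$ from Lemma~\ref{lem:sign}, which applies because $a$ is an irrational common atom. Everything else is a direct invocation of Corollary~\ref{cor:forbidden}, whose content is ultimately the cancellation Lemma~\ref{lem:cancel}.
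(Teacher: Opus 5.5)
Your proof is correct and follows essentially the paper's route. Corollary~\ref{cor:forbidden}(\ref{item:forbidden-atom}) prevents a factorization of $a\beta$ from having positive $a$-coordinate, and then $K = \zz\rho$ with $\rho_a > 0$ forces uniqueness. The difference is that you apply the corollary with $y = 1$ to a single copy of $a$, which avoids the paper's case split on atoms lying in $aS$. The cost is that you only get vanishing of the $a$-coordinate, not the paper's stronger fact that such factorizations avoid $aS$ entirely, and the paper reuses that fact in Claim~\ref{claim:positive-support}.
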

    \begin{proof}[\mathsc{Proof of Claim~\ref{claim:unique-products}}]
        \let\qed\relax
        If $a\beta$ is an additive atom then its only factorization is $e_{a\beta}$. Otherwise, consider a factorization of $a\beta$. It cannot contain both an atom in $aS$ and an atom outside $aS$, since such a splitting would give $m + ay = a\beta$ with $m, y \in S^*$, contrary to Corollary~\ref{cor:forbidden}(\ref{item:forbidden-atom}). Nor can all of its atoms belong to $aS$: dividing their sum by $a$ and using that $\beta$ is an additive atom would force the factorization to have only one atom. Hence every factorization is supported outside $aS$. Every two such factorizations have zero $a$-coordinate; their difference lies in $\zz\rho$, and $\rho_a > 0$, so they are equal. Therefore Claim~\ref{claim:unique-products} is established.
    \end{proof}

        Define $B \colon \zz^{(\A_+(S))} \to \zz^{(\A_+(S))}$ by sending $e_\beta$ to the unique factorization of $a\beta$ supplied by Claim~\ref{claim:unique-products}. We have $B(e_1) = e_a$. For $\beta \ne 1$, the $a$-coordinate of $B(e_\beta)$ is zero: this was proved above when $a\beta$ is not an additive atom; when it is an additive atom, $a\beta = a$ would force $\beta = 1$.

    \begin{claim}\label{claim:positive-support}
            There is an atom $\gamma \ne 1$ such that the factorization $B(e_\gamma)$ is supported outside $aS$ and contains both $1$ and an atom different from $1$. Moreover, every $\beta \in \A_+(S)$ with $\rho_\beta > 0$ belongs to $\{a, \gamma\}$.
    \end{claim}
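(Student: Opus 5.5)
The plan is to exploit that $B$ lifts multiplication by $a$: since $\pi(B e_\beta) = a\beta$ for every $\beta$, we get $\pi B = a\pi$, so $B$ maps $K = \zz\rho$ into itself. Hence $B\rho = k\rho$ for some $k \in \zz$. First I would determine the sign of $k$ by reading off the $a$-coordinate. Since $B(e_1) = e_a$ and $B(e_\beta)$ has zero $a$-coordinate for $\beta \neq 1$, we get $(B\rho)_a = \rho_1$. Thus $k\rho_a = \rho_1$. With the orientation of Lemma~\ref{lem:sign}, $\rho_1 < 0 < \rho_a$, so $k < 0$. Consequently $B\rho$ is positive exactly on $\supp(\rho^-)$ and negative exactly on $\supp(\rho^+)$. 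In particular $(B\rho)_1 = k\rho_1 > 0$.

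Since all columns of $B$ are nonnegative, the positivity of $(B\rho)_1$ forces some $\gamma$ with $\rho_\gamma > 0$ such that $B(e_\gamma)$ contains $1$. Then $\gamma \neq 1$ because $\rho_1 < 0$. Next I check the required shape of $B(e_\gamma)$.
\begin{itemize}
\item If $a\gamma$ were an additive atom, then $B(e_\gamma) = e_{a\gamma}$ would contain $1$ only when $a\gamma = 1$. This is impossible, because $a$ is a multiplicative nonunit by Lemma~\ref{lem:basic}(\ref{item:basic-units}).
\item So $a\gamma$ is not an additive atom, and the proof of Claim~\ref{claim:unique-products} shows that $B(e_\gamma)$ is supported outside $aS$.
\item If $B(e_\gamma)$ consisted only of copies of $1$, then $a\gamma \in \nn$, so $a$ would divide a positive integer in $S^*$. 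This contradicts Lemma~\ref{lem:integer-obstruction}, whose hypotheses $\rho_1 < 0 < \rho_a$ hold here.
\end{itemize}
Hence $B(e_\gamma)$ contains both $1$ and an atom different from $1$. This settles the first assertion.

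For the second assertion, I would suppose that $\beta \notin \{a,\gamma\}$ with $\rho_\beta > 0$ and aim for a contradiction. I would compare $B\rho^+$ and $B\rho^- + k\rho$ as two additive factorizations of the same element $a\pi(\rho^+) = a\pi(\rho^-)$. On the positive side, $B\rho^+$ contains both $B(e_\gamma)$ and $B(e_\beta)$. The first tool is Corollary~\ref{cor:forbidden}(\ref{item:forbidden-atom}) applied with $d = a$: after splitting factorizations according to membership in $aS$, it prevents a sum of atoms outside $aS$ from being rearranged against $a\beta$. The second tool is the uniqueness of the factorization $me_\delta$ of $m\delta$, proved inside Lemma~\ref{lem:integer-obstruction}; it forces rigidity of multiples. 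I would first try to show that the $1$-coordinate of $B\rho$ can come from only one positive-support atom of $\rho$ other than $a$. The idea is to compare $1$-coordinates and lengths: $\lambda(B\rho) = k\lambda(\rho)$, and $\lambda$ is injective on $K$. Two distinct contributors would produce, after dividing by $a$, a relation among elements of $S$ in $\zz\rho$. Such a relation should violate $K = \zz\rho$, either through a length mismatch or through the sign pattern of Lemma~\ref{lem:sign}.

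The main obstacle is this last step: ruling out extra positive-support atoms $\beta$ whose images $B(e_\beta)$ avoid $1$. For such $\beta$ the $1$-coordinate gives no information. My fallback is the $\delta$-coordinates for $\delta \in \supp(\rho^+)\setminus\{a\}$. Each such coordinate must be negative in $B\rho$, so it must be covered by $B(\rho^-)$. I would then try to show that this covering, together with $B(e_\beta) \neq 0$ contributing positively, contradicts Corollary~\ref{cor:forbidden}(\ref{item:forbidden-atom}) applied to $a\beta$.
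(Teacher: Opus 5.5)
Your argument for the first assertion is correct and is essentially the paper's. You read off $k<0$ from the $a$-coordinate of $B\rho = k\rho$. Positivity of $(B\rho)_1$ then yields $\gamma$ with $\rho_\gamma > 0$ and $1 \in \supp(B(e_\gamma))$. This factorization lies outside $aS$, and Lemma~\ref{lem:integer-obstruction} supplies an atom different from $1$.

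\textbf{The second assertion is not proved, and the routes you sketch do not lead to it.}

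Because $B\rho = k\rho$ is an identity of vectors, $B\rho^+$ and $B\rho^- + k\rho$ are literally the same element of $\zz^{(\A_+(S))}$, not two different factorizations. Comparing them, or their lengths, produces no new element of $K$, only the coordinate equalities you already used. As you note yourself, an atom $\beta \notin \{a,\gamma\}$ with $\rho_\beta > 0$ whose column $B(e_\beta)$ avoids $1$ is invisible to this bookkeeping. Your fallback via Corollary~\ref{cor:forbidden}(\ref{item:forbidden-atom}) would need an actual decomposition $a\beta = m + ay$ with $m, y \in S^*$. Covering the $\delta$-coordinates by columns $B(e_{\beta'})$ with $\rho_{\beta'} < 0$ does not supply one.

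The missing idea is an element-level relation coming from $x = a^2 - a + 1 \in S$ (Lemma~\ref{lem:polynomial}), combined with cancellation across a sum:
\begin{enumerate}[label=(\roman*)]
\item Remove one copy of $1$ from $B(e_\gamma)$ to write $a\gamma = 1 + u$, where $u \in S^*$ has a factorization supported outside $aS$. Then $x + a = 1 + a^2$ gives $u + x + a = a(\gamma + a)$.
\item Factor the left side additively, keeping the summand $a = a\cdot 1$. Split it as $\pi(Z) + a\pi(Y)$ according to divisibility by $a$. Then $Z \ne 0$ because of $u$, and $Y_1 \ge 1$ because of the summand $a$.
\item Lemma~\ref{lem:cancel} gives $\pi(Z)/a \in S$. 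This yields a factorization $E$ of $\gamma + a$ with $E_1 \ge 1$.
\item Write $E - e_\gamma - e_a = t'\rho$. The positive $1$-coordinate and $\rho_1 < 0$ force $t' < 0$. Nonnegativity of the left side at each coordinate outside $\{a,\gamma\}$ then forces $\rho_\beta \le 0$ there.
\end{enumerate}
Your proposal contains no comparable mechanism for controlling the atoms of the positive support that never contribute to the $1$-coordinate.
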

    \begin{proof}[\mathsc{Proof of Claim~\ref{claim:positive-support}}]
        \let\qed\relax
        Since $\pi B = a\pi$, the map $B$ preserves $K$. Write $B(\rho) = t\rho$ for $t \in \zz$. Comparing $a$-coordinates gives $t\rho_a = \rho_1$, so $t < 0$. Write $t = -m$ with $m \in \nn$. Comparison at coordinate $1$ gives
        \[
                \sum_{\beta \in \A_+(S)}\rho_\beta\bigl(B(e_\beta)\bigr)_1
            = -m\rho_1 > 0.
        \]
            Consequently, there is $\gamma \in \A_+(S)$ with $\rho_\gamma > 0$ and $\bigl(B(e_\gamma)\bigr)_1 > 0$. Every coefficient of $B(e_\beta)$ is nonnegative, so a positive sum forces at least one positive contribution. Since $\rho_\gamma > 0 > \rho_1$, we have $\gamma \ne 1$. The factorization $B(e_\gamma)$ of $a\gamma$ contains $1$, and it is supported entirely outside $aS$: a mixed factorization was ruled out above, and $1 \notin aS$.

        The atom $a$ is irrational and satisfies $\rho_1 < 0 < \rho_a$, so Lemma~\ref{lem:integer-obstruction} gives $a\gamma \notin \nn$. Thus, this factorization also contains an atom different from $1$. Remove one copy of $1$ and write $a\gamma = 1 + u$, with $u \in S^*$. Fix for $u$ the remaining factorization, which is still supported outside $aS$ and contains an atom different from $1$. Equation~\eqref{eq:quadratic-identity} gives
        \[
            u + x + a = a(\gamma + a).
        \]
        Factor $x$ additively, retain the fixed factorization of $u$, and retain the explicit summand $a$. Split the resulting factorization as $\pi(Z) + a\pi(Y)$ according to divisibility by $a$. Then $Z \ne 0$ because of the summands from $u$, and $Y_1 \ge 1$ because of the explicit $a$. Lemma~\ref{lem:cancel} gives $\pi(Z)/a \in S$. Dividing by $a$ gives $\pi(Z)/a + \pi(Y) = \gamma + a$. Adding an additive factorization of $\pi(Z)/a$ to $Y$ therefore produces a factorization $E$ of $\gamma + a$ with $E_1 \ge Y_1 > 0$.

        Write $E - e_\gamma - e_a = t'\rho$. Since $\gamma \ne 1$, its $1$-coordinate is positive, whereas $\rho_1 < 0$; hence $t' < 0$. At each coordinate outside $\{a, \gamma\}$ the left side is nonnegative, so $\rho_\beta \le 0$ there. Therefore
        \begin{equation}\label{eq:positive-support}
                \{\beta \in \A_+(S) : \rho_\beta > 0\} \subseteq \{a, \gamma\}.
        \end{equation}
        Therefore Claim~\ref{claim:positive-support} is established.
    \end{proof}

    Fix $\gamma$ as in Claim~\ref{claim:positive-support}.

    \begin{claim}\label{claim:common-gamma}
        The element $\gamma$ is an irrational common atom distinct from $a$, and it divides an atom $\delta \ne 1$ occurring in $B(e_\gamma)$.
    \end{claim}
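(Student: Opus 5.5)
Since $\gamma$ was fixed in Claim~\ref{claim:positive-support}, I will work with the factorization $B(e_\gamma)$ of $a\gamma$. It is supported outside $aS$ and contains both $1$ and some atom $\delta \ne 1$.

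The plan has three steps. First show that $\gamma$ is irrational and distinct from $a$. Then show that $\gamma$ is a multiplicative atom. Finally produce a multiplicative atom of $\gamma$ inside such a $\delta$.

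\emph{Distinctness and irrationality.} The $a$-coordinate of $B(e_\beta)$ vanishes for $\beta \ne 1$. Since $\gamma \ne 1$ and $B(e_\gamma)$ is supported outside $aS$ while $a \in aS$, the case $\gamma = a$ would place the atom $a^2$ or its factorization outside $aS$. But $a \cdot a = a^2 \in aS$, and a factorization supported outside $aS$ of an element of $aS$ contains no atom of $aS$. So the factorization is nonempty outside $aS$, which is possible in principle. I would instead argue from the supports. By Claim~\ref{claim:positive-support} and the proof of Lemma~\ref{lem:integer-obstruction}, the positive support of $\rho$ contains $\gamma$ and $a$. It has at least two elements, since otherwise $\rho_\gamma>0$ with $\gamma\neq a$ is impossible; that would also give $\gamma \ne a$ directly once one checks $\gamma\ne a$ from $\rho_\gamma>0$ and $\rho_a>0$ being distinct entries. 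Concretely, if $\gamma = a$ then $B(e_a)$ contains $1$, so $a^2 = 1 + u$. Combined with $x + a = 1 + a^2$ this gives $x + a = 2 + u$, and I will test this against Lemma~\ref{lem:polynomial} and the uniqueness in Claim~\ref{claim:unique-products}. For irrationality, Lemma~\ref{lem:basic}(\ref{item:basic-rationals}) gives that any rational atom is $1$, and $\gamma \neq 1$.

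\emph{Multiplicative atomicity.} Suppose $\gamma = st$ with $s,t$ multiplicative nonunits. By Lemma~\ref{lem:basic}(\ref{item:basic-divisors}) both are additive atoms, and since $S^\times=\{1\}$ both differ from $1$. I will apply $B$ and the relation $B\rho = -m\rho$ together with the positive-support bound $\{\beta:\rho_\beta>0\}\subseteq\{a,\gamma\}$. The idea is to consider the lifts of multiplication by $s$ or $t$, built as in Lemma~\ref{lem:integer-obstruction}, which also preserve $K$. If an irrational factor, say $s$, had $\rho_s>0$, it would lie in $\{a,\gamma\}$, which forces degenerate equalities. Otherwise I will use Lemma~\ref{lem:sign}, which says every irrational common atom $p$ has $\rho_p>0$ and hence $p\in\{a,\gamma\}$. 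Factoring $\gamma$ into common atoms (each an additive atom, by Lemma~\ref{lem:basic}(\ref{item:basic-divisors})), every irrational factor must be $a$ or $\gamma$. A factor equal to $a$ makes $\gamma \in aS$, so $a\gamma \in a^2S$. I will then compare with the factorization $B(e_\gamma)$ supported outside $aS$, using Corollary~\ref{cor:forbidden}(\ref{item:forbidden-atom}) to exclude this. Hence $\gamma$ is a multiplicative atom.

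\emph{Finding $\delta$.} Write $a\gamma = k + \sum_i \delta_i$, where $k \ge 1$ counts the copies of $1$ and $\delta_i \ne 1$. I want a $\delta_i\in\gamma S$. The approach is to multiply $x + a = 1 + a^2$ by $\gamma$ and split factorizations according to divisibility by $\gamma$, mirroring the $aS$-splitting. Then Lemma~\ref{lem:cancel} with $d=\gamma$ forces the non-$\gamma S$ part to be divisible by $\gamma$. Since $\gamma$ is prime in $S^*$ and $k\notin\gamma S$ (as $\gamma\nmid n$ by Lemma~\ref{lem:integer-obstruction}, once $\rho_\gamma>0$), some $\delta_i$ must absorb the factor.

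I expect this last step to be the main obstacle, because divisibility of a sum does not pass to summands without a cancellation trick. The first thing I would try is to apply Lemma~\ref{lem:cancel} to $k + \sum_{\delta_i\notin\gamma S}\delta_i + \gamma(\cdots) = \gamma a$. This would show that the non-$\gamma S$ part lies in $\gamma S$, which contradicts additive irreducibility unless that part is empty or the $\gamma S$ part is empty.
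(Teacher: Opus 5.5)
\textbf{There are genuine gaps.} Your third step is aimed in a direction that cannot succeed.

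Your irrationality argument is fine. The other two parts of your first two steps are not established:
\begin{enumerate}
\item You never actually prove $\gamma\neq a$. The support argument is circular, and the test of $x+a=2+u$ is left open.
\item In the atomicity step you correctly reduce to $\gamma=a^k$ with $k\ge 2$, but then only gesture at Corollary~\ref{cor:forbidden}(\ref{item:forbidden-atom}). That corollary needs a split $m+ay=a\gamma$ with both parts nonzero. But $B(e_\gamma)$ is the unique factorization of $a\gamma$ and has no atoms in $aS$ at all, so no such split exists.
\end{enumerate}

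The last step is the serious problem. Write $a\gamma=m+\gamma y$, where $m$ is the sum of the atoms of $B(e_\gamma)$ outside $\gamma S$. Then $m\neq 0$, because it contains the copy of $1$ and $\gamma$ is a nonunit. Lemma~\ref{lem:cancel} with $d=\gamma$ gives $m/\gamma\in S$, hence $a=m/\gamma+y$, and additive irreducibility of $a$ forces $y=0$. So no atom of $B(e_\gamma)$ lies in $\gamma S$. That is the negation of what you are trying to prove, and it is exactly the contradiction the paper derives immediately after this claim. Cancellation of divisibility can only exclude $\delta\in\gamma S$; it can never produce it. Primality of $\gamma$ also does not pass from a sum to a summand, so ``some $\delta_i$ must absorb the factor'' is not a gap to be patched but a dead end.

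\textbf{The missing idea} is to run the sign/support argument on a divisor of $\delta$ rather than on the factors of $\gamma$.
\begin{enumerate}
\item Take any atom $\delta\neq 1$ of $B(e_\gamma)$, which exists by Claim~\ref{claim:positive-support}.
\item $\delta$ is irrational by Lemma~\ref{lem:basic}(\ref{item:basic-rationals}), and $\delta\notin aS$.
\item Since $S^\times=\{1\}$, $\delta$ is a multiplicative nonunit. As in Lemma~\ref{lem:common}, its multiplicative factorization contains an irrational common atom $p$.
\item $p\neq a$ because $\delta\notin aS$.
\item Lemma~\ref{lem:sign} gives $\rho_p>0$, and \eqref{eq:positive-support} then forces $p=\gamma$.
\end{enumerate}
This single argument gives all three assertions at once: $\gamma$ is an irrational common atom, $\gamma\neq a$, and $\gamma\mid_{S^*}\delta$. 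Your separate distinctness and atomicity steps are then unnecessary. You did have the key ingredient, namely that irrational common atoms are forced into $\{a,\gamma\}$, but you applied it to the wrong element.
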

    \begin{proof}[\mathsc{Proof of Claim~\ref{claim:common-gamma}}]
        \let\qed\relax
        Choose an atom $\delta \ne 1$ in the chosen factorization of $a\gamma$. It is irrational by Lemma~\ref{lem:basic}(\ref{item:basic-rationals}), and $\delta \notin aS$. Factor $\delta$ multiplicatively. As in Lemma~\ref{lem:common}, it has an irrational common atom divisor $p$. Since $\delta \notin aS$, we have $p \ne a$. Lemma~\ref{lem:sign} gives $\rho_p > 0$, so \eqref{eq:positive-support} forces $p = \gamma$. Thus, $\gamma$ is an irrational common atom distinct from $a$, and $\gamma \mid_{S^*} \delta$. Therefore Claim~\ref{claim:common-gamma} is established.
    \end{proof}

    Fix $\delta$ as in Claim~\ref{claim:common-gamma}. Split the chosen additive factorization of $a\gamma$ according to divisibility by $\gamma$. The occurrence of $1$ supplies a nonzero outside part, since $\gamma$ is a nonunit. The occurrence of $\delta$ supplies a nonzero divisible part. We obtain
    \[
        m' + \gamma y' = \gamma a
    \]
    for some $m', y' \in S^*$, contrary to Corollary~\ref{cor:forbidden}(\ref{item:forbidden-atom}). This contradiction proves that $(S, +)$ is factorial.
\end{proof}

%%%%%%%%%%%%%%%%%%%%%%%%%%%%%%%%%%%%%%%%%%%%%%%%%%%%%%%%%%%%%%%%%%
%%%%%%%%%%%%%%%%%%%%%%%%%%%%%%%%%%%%%%%%%%%%%%%%%%%%%%%%%%%%%%%%%%
\bigskip
\section{From the Additive Upgrade to the Classification}\label{sec:completion}

We complete the proof of Theorem~\ref{thm:bilf} by treating the additively reduced bi-factorial case and then applying Lemma~\ref{lem:ring-or-reduced} to handle the remaining semidomains. We include the reduced-case proof for completeness.

\begin{lemma}[The additively reduced bi-factorial case]\label{lem:biuf}
    Let $S$ be an additively reduced semidomain. If both $(S, +)$ and $S^*$ are factorial then $S$ is isomorphic to $\nn_0$.
\end{lemma}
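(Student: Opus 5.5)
We argue directly; no relation vector is available here, but factoriality of $(S,+)$ makes the additive structure very rigid, so a short argument should suffice. Since $(S,+)$ is reduced and factorial, it is free: $(S,+) \cong \nn_0^{(\A_+(S))}$. The plan is to show $\A_+(S) = \{1\}$. Then every element is a sum of copies of $1$, so $S = \nn_0$.

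First, $1 \in \A_+(S)$. The argument of Lemma~\ref{lem:basic}(\ref{item:basic-units}) used only additive reducedness and the existence of an additive atom. Next, multiplication by any $s \in S^*$ is an injective additive endomorphism of the free monoid $(S,+)$. Now fix an atom $\alpha \ne 1$, aiming for a contradiction.

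The key step is the identity
\[
(1+\alpha)(1+\alpha) = 1 + \alpha + \alpha + \alpha^2 .
\]
In a free commutative monoid, unique factorization lets us compare coordinates. I would prefer to exploit that $S^*$ is factorial together with the additive freeness, mimicking Lemma~\ref{lem:polynomial}. Consider
\[
(1+\alpha+\alpha^2)(1+\alpha^3) = (1+\alpha)(1+\alpha^2+\alpha^4).
\]
Let $u$ be a greatest common divisor of $1+\alpha+\alpha^2$ and $1+\alpha$ in $S^*$. If an atom $q$ divides $u$, write $1+\alpha = qw$ and $1+\alpha+\alpha^2 = qv$. This gives $1 + q(\alpha w) = qv$.

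In a free additive monoid, this is where Lemma~\ref{lem:cancel} must be replaced by a direct argument. Multiplication by $q$ is an injective monoid map $\nn_0^{(A)} \to \nn_0^{(A)}$. In the free monoid, $qv$ and $q(\alpha w)$ are images, and $1$ equals their difference. I expect to argue that $qv - q\alpha w = 1$ in $S - S = \zz^{(A)}$, so $q(v - \alpha w) = 1$ with $v - \alpha w \in S$, because $qv = 1 + q\alpha w$ exhibits $q\alpha w \le qv$ coordinatewise. One then needs that divisibility in the image transfers, which is the main obstacle.

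A cleaner route to the main step is a degree argument. Since $S^*$ is factorial and $S^\times$ consists of elements permuting the atoms, every additive atom $\alpha$ satisfies $\alpha \cdot \nn_0^{(A)} \subseteq \nn_0^{(A)}$ via a nonnegative integer matrix $U_\alpha$. The map $s \mapsto U_s$ is an injective ring homomorphism from $S$ into nonnegative integer matrices (columns of finite support).

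Restrict to the finite set of atoms reached from $1$ by iterating $U_\alpha$. If this set is finite, $\alpha$ is an eigenvalue of a nonnegative integer matrix $U$. Writing $\alpha^k = \sum_\beta (U^k e_1)_\beta\, \beta$ with $U^k$ having nonnegative entries, and using that the characteristic polynomial of $U$ has integer coefficients, we split it into positive and negative parts: $f^+(\alpha) = f^-(\alpha)$ with $f^\pm \in \nn_0[x]$. Both sides are then elements of $S$ with distinct additive factorizations unless $f^+ = f^-$ coefficientwise after expansion. If the set is infinite, the elements $1,\alpha,\alpha^2,\dots$ have pairwise distinct supports.

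Either way, the ultimate contradiction comes from transcendence or algebraicity of $\alpha$, combined with freeness. Then $1+\alpha$ is multiplicatively irreducible-type data, and I would compare factorizations of $(1+\alpha)^2$ and $(1+\alpha^2)$-type identities to force $\alpha \in \nn_0$, hence $\alpha = 1$. I expect controlling this final comparison to be the hardest part.
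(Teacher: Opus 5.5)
Your proposal has a genuine gap: it never reaches a contradiction, and the missing step is the one that carries the proof, namely turning multiplicative factoriality into membership in $S$.

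\textbf{Where each route stalls.}
\begin{enumerate}[label=(\roman*)]
\item In your first route you reach $1 + q(\alpha w) = qv$ and want $v - \alpha w \in S$. The justification you give is invalid. From $q\alpha w \le qv$ coordinatewise you cannot conclude $\alpha w \le v$, because multiplication by $q$ is a nonnegative integer matrix, and such matrices need not reflect the coordinatewise order. For instance, the matrix $U$ with $e_1 \mapsto e_1 + e_2$ and $e_2 \mapsto e_2$ has $Ue_2 \le Ue_1$ although $e_2 \not\le e_1$. You then call this step ``the main obstacle'' and abandon it.
\item Your second route cannot work as described. Additive freeness is compatible with algebraic atoms $\neq 1$: $(\nn_0[\sqrt2], +)$ is free on $\{1, \sqrt2\}$. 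It is also compatible with transcendental ones: $(\nn_0[\pi], +)$ is free on the powers of $\pi$. So no contradiction can come from ``algebraicity or transcendence combined with freeness.'' In particular, $f^+(\alpha) = f^-(\alpha)$ is harmless in a free monoid, since equal elements have the same factorization; for $\alpha = \sqrt2$ and $x^2 - 2$, both sides are $2e_1$.
\end{enumerate}
Multiplicative factoriality has to be used essentially, and outside the unfinished gcd step your sketch does not use it.

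\textbf{The missing cancellation step.} It is easy in the free setting. Write $\lambda_\beta$ for the $\beta$-coordinate on $\gp_+(S) \cong \zz^{(\A_+(S))}$. Suppose $q \in \A_\times(S)$ and $m + qy = qz$ with $m \neq 0$, and put $c := m/q = z - y$. The binomial computation of Lemma~\ref{lem:ray} uses only the primality of $q$ and additive reducedness, and it gives $y + Nc \in S$ for every $N \in \nn$. Hence $\lambda_\beta(y) + N\lambda_\beta(c) \ge 0$ for all $N$, so every coordinate of $c$ is nonnegative, $c \in S$, and $q \mid_{S^*} m$. This is just the proof of Lemma~\ref{lem:cancel} with $D = 1$; nothing in that proof used that $(S,+)$ is not factorial. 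Induction on factorization length extends it to every $d \in S^*$.

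\textbf{How your first route then closes.}
\begin{enumerate}[label=(\arabic*)]
\item Cancellation rules out $1 + q(\alpha w) = qv$, since it would give $1/q \in S$. So $1+\alpha$ and $1+\alpha+\alpha^2$ have no common atom, hence $1 + \alpha \mid_{S^*} 1 + \alpha^3$ and $x := \alpha^2 - \alpha + 1 \in S$.
\item Comparing $\alpha$-coordinates in $x + \alpha = 1 + \alpha^2$ gives $\lambda_\alpha(\alpha^2) \ge 1$. Hence $0 \neq \alpha^2 - \alpha \in S$.
\item Applying cancellation to $(\alpha^2 - \alpha) + \alpha \cdot 1 = \alpha \cdot \alpha$ gives $\alpha - 1 \in S^*$. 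This contradicts the additive irreducibility of $\alpha = (\alpha - 1) + 1$.
\end{enumerate}

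\textbf{The paper's mechanism.} The paper obtains its cancellation differently. Freeness alone shows that no nonunit divides every $x + n$ with $n \ge r$. This yields a finite block with no common nonunit divisor, and denominator clearing then forces $H = a^2 - ka + kN$ into $S$ even though $\lambda_a(H) = -1$. Some such mechanism is indispensable, and your proposal does not supply one.
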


\begin{proof}
        The additive monoid is free on $\A_+(S)$. The argument proving $1 \in \A_+(S)$ at the start of Lemma~\ref{lem:basic} uses only additive atomicity and additive reducedness and therefore applies here. For each $b \in \A_+(S)$, let $\lambda_b \colon \gp_+(S) \to \zz$ denote the corresponding additive coordinate. Then $g \in \gp_+(S)$ belongs to $S$ exactly when all its coordinates are nonnegative.

        Fix $x \in S$ and $r \in \nn$. Additive reducedness ensures that $x + n \ne 0$ for every $n \in \nn$. No multiplicative nonunit $d$ divides every $x + n$ with $n \ge r$. Indeed, if $x + n = dy_n$ then $d(y_{n + 1} - y_n) = 1$. Thus, for $c := y_{r + 1} - y_r \in \gp_+(S)$,
    \[
        dc = 1,\qquad y_n = y_r + (n - r)c.
    \]
    If some coordinate of $c$ were negative, the corresponding coordinate of $y_n$ would eventually be negative. Therefore $c \in S$, which would make $d$ a unit. Factor $x + r$ multiplicatively. Each atom in this finite factorization fails to divide some later $x + n$; choosing $N \ge r$ past all these failures gives a block
    \begin{equation}\label{eq:finite-block}
        x + r, x + r + 1, \ldots, x + N
    \end{equation}
    with no common nonunit divisor. If $x + r$ is a unit, take $N = r$. Otherwise, every common nonunit divisor would have an atom dividing $x + r$, which is exactly what the construction excludes.

    We also use denominator clearing in the factorial monoid $S^*$. Let $t \in \nn$. Suppose that $q$ belongs to the quotient group of $S^*$, that $s_1, \ldots, s_t \in S^*$ have no common nonunit divisor, and that $qs_i \in S^*$ for every $i$. Write $q = u/v$ with $u, v \in S^*$ and cancel their common multiplicative atoms. The relations $us_i = v(qs_i)$ imply $v \mid_{S^*} s_i$ for every $i$. Thus, $v$ is a unit and $q \in S^*$.

        Suppose that $a \in \A_+(S) \setminus \{1\}$. Put $k := \lambda_a(a^2) + 1$, and choose $N \ge k$ so that $a + k, \ldots, a + N$ have no common nonunit divisor. Define $H := a^2 - ka + kN \in \gp_+(S)$. We have
    \begin{equation}\label{eq:clearing-polynomial}
        (a + n)H = a^3 + (n - k)a^2 + k(N - n)a + nkN \in S^*
    \end{equation}
    for every integer $n$ with $k \le n \le N$. All coefficients are nonnegative integers, and the nonzero summand $nkN$ ensures that these products are nonzero by additive reducedness. In particular, $H = ((a + k)H)/(a + k)$ belongs to the multiplicative quotient group. Denominator clearing gives $H \in S$. But
    \[
        \lambda_a(H) = \lambda_a(a^2) - k + kN\lambda_a(1) = -1,
    \]
    a contradiction. Thus, $1$ is the only additive atom, and atomicity gives $S = \nn_0$.
\end{proof}

With the tools we have developed, we are in a good position to classify all bi-length-factorial semidomains and thereby prove the Bi-LF Positive Conjecture.

\begin{proof}[Proof of Theorem~\ref{thm:bilf}]
    Suppose that $(S, +)$ and $S^*$ are length-factorial. By Bu, Vulakh, and Zhao~\cite[Corollary 5.4]{BVZ23}, the multiplicative monoid of a semidomain is length-factorial if and only if it is factorial. Thus, $S^*$ is factorial. Theorem~\ref{thm:upgrade} implies that $(S, +)$ is factorial. If $S$ is additively reduced then Lemma~\ref{lem:biuf} gives $S \cong \nn_0$. Otherwise, Lemma~\ref{lem:ring-or-reduced} shows that $S$ is a unique factorization domain.

    Conversely, $\nn_0$ has factorial additive and multiplicative monoids. If $S$ is a unique factorization domain then $S^*$ is factorial and $(S, +)$ is a group, hence factorial. Therefore both alternatives in the statement give bi-factorial, and consequently bi-length-factorial, semidomains.
\end{proof}

%%%%%%%%%%%%%%%%%%%%%%%%%%%%%%%%%%%%%%%%%%%%%%%%%%
%%%%%%%%%%%%%%%%%%%%%%%%%%%%%%%%%%%%%%%%%%%%%%%%%%
\bigskip
\section{Quasi-Factoriality and Related Questions}
\label{sec:quasi-factoriality}

We conclude by considering uniqueness of factorizations of a prescribed length. For a fixed integer $n \ge 2$, quasi-$n$-factoriality implies quasi-$2$-factoriality. Indeed, if two distinct factorizations of length two existed then adjoining $n - 2$ copies of the same atom to each would give two distinct factorizations of length $n$. The distinction is preserved because adjoining the same multiset of atoms to two different multisets cannot make them equal.

%%%%%%%%%%%%%%%%%%%%%%%%%%%%%
\medskip
\subsection{Integral Domains}

The following question is equivalent to the open problem at the end of~\cite{CC26}, in view of the preceding observation and the equivalence between length-factoriality and factoriality for integral domains~\cite{CS11}.

\begin{question}\label{question:quasi-two-domain}
    Must every atomic quasi-$2$-factorial integral domain be length-factorial, or equivalently a unique factorization domain?
\end{question}

Every quasi-$2$-factorial monoid is square-factorial, since the two factorizations of length two in an equality $u^2 = vw$ between products of atoms must agree up to order and associates.

The additive structure of an integral domain provides a way to construct relations of length two. Let $R$ be an integral domain, and suppose that an atom $p \in R$ is not prime. Choose $x, y \in R \setminus pR$ and $z \in R$ with $xy = pz$. Then
\begin{equation}\label{eq:quasi-fundamental}
    (p + x)(p + y) = p^2 + p(x + y) + xy = p(p + x + y + z).
\end{equation}
If the three elements $p + x$, $p + y$, and $p + x + y + z$ are atoms then~\eqref{eq:quasi-fundamental} gives two distinct factorizations of length two. Indeed, neither $p + x$ nor $p + y$ is associate to $p$, because $x, y \notin pR$. Therefore $R$ is not quasi-$2$-factorial. The issue is to choose $x, y, z$ so that all three sums are atoms.

Under the following quasilocal hypotheses, the atomhood of the sums follows from their images modulo the square of the maximal ideal. Distinctness of the factorizations remains a separate condition.

\begin{prop}\label{prop:quasilocal-shift}
    Let $R$ be a quasilocal domain with maximal ideal $\mathfrak{m}$. Suppose that $p \in \mathfrak{m} \setminus \mathfrak{m}^2$ and that $x, y, z \in \mathfrak{m}^2$ satisfy $xy = pz$. Then $p$, $p + x$, $p + y$, and $p + x + y + z$ are atoms. If the two factorizations in~\eqref{eq:quasi-fundamental} are distinct up to order and associates then $R$ is not quasi-$2$-factorial.
\end{prop}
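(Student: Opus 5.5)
The plan is to prove a general principle first: in a quasilocal domain $R$ with maximal ideal $\mathfrak{m}$, every element of $\mathfrak{m} \setminus \mathfrak{m}^2$ is an atom. Then I would show that each of the four elements lies in $\mathfrak{m} \setminus \mathfrak{m}^2$.

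For the principle, take $q \in \mathfrak{m} \setminus \mathfrak{m}^2$. It is a nonzero nonunit, because $0 \in \mathfrak{m}^2$ and $\mathfrak{m}$ consists exactly of the nonunits. Suppose $q = bc$ with $b, c$ both nonunits. Then $b, c \in \mathfrak{m}$, so $q = bc \in \mathfrak{m}^2$, a contradiction. Hence one factor is a unit, and $q$ is an atom.

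Next I would apply this to the four elements. The element $p$ lies in $\mathfrak{m} \setminus \mathfrak{m}^2$ by hypothesis. For $w \in \{x, y, x + y + z\}$, the element $w$ lies in $\mathfrak{m}^2$, since $\mathfrak{m}^2$ is an ideal and hence closed under addition. So $p + w \in \mathfrak{m}$, because $\mathfrak{m}^2 \subseteq \mathfrak{m}$. Moreover $p + w \notin \mathfrak{m}^2$, since otherwise $p = (p + w) - w \in \mathfrak{m}^2$. Thus $p + x$, $p + y$, and $p + x + y + z$ all lie in $\mathfrak{m} \setminus \mathfrak{m}^2$ and are atoms.

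For the last assertion, the identity~\eqref{eq:quasi-fundamental} follows from expanding and substituting $xy = pz$. Its two sides are now factorizations of length two into atoms. If they are distinct up to order and associates, the common element has two distinct length-two factorizations, so $R$ is not quasi-$2$-factorial. I do not expect a genuine obstacle here; the only point needing care is checking that the sums avoid $\mathfrak{m}^2$, which uses the fact that $\mathfrak{m}^2$ is an additive subgroup.
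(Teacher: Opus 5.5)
Your proof is correct and follows the paper's argument essentially verbatim: both place the four elements in $\mathfrak{m} \setminus \mathfrak{m}^2$ (the paper via congruence to $p$ modulo $\mathfrak{m}^2$) and then observe that a product of two nonunits would land in $\mathfrak{m}^2$. The final assertion then follows immediately from the identity~\eqref{eq:quasi-fundamental}, exactly as in the paper.
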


\begin{proof}
    Since $x, y, z \in \mathfrak{m}^2$, we have
    \[
        p + x \equiv p + y \equiv p + x + y + z \equiv p \pmod{\mathfrak{m}^2}.
    \]
    Thus, all four elements belong to $\mathfrak{m} \setminus \mathfrak{m}^2$. Each is a nonzero nonunit. If one were a product of two nonunits then both factors would lie in $\mathfrak{m}$, and their product would belong to $\mathfrak{m}^2$, a contradiction. Hence all four elements are atoms. The final assertion follows from~\eqref{eq:quasi-fundamental}.
\end{proof}

A localization argument for Question~\ref{question:quasi-two-domain} would require verifying atomicity and quasi-$2$-factoriality in the localized domain. Localization can change which elements are atoms and the lengths of their factorizations, so these properties require justification after localization.

%%%%%%%%%%%%%%%%%%%%%%%%%%%%%%%
\medskip
\subsection{Positive Semirings}

Theorem~\ref{thm:bilf} imposes uniqueness at every factorization length. It is natural to ask whether uniqueness at a single prescribed length suffices.

\begin{question}\label{question:positive-quasi}
    Fix an integer $n \ge 2$. Is $\nn_0$ the only subsemiring $S \subseteq \rr_{\ge 0}$ containing $1$ such that both $(S, +)$ and $S^*$ are quasi-$n$-factorial?
\end{question}

Atomicity of both monoids is part of the hypotheses in Question~\ref{question:positive-quasi}. The following example shows that factoriality of the multiplicative monoid alone does not force the conclusion.

\begin{example}\label{example:dyadic-semiring}
    Consider the positive semiring
    \[
        S := \nn_0[1/2] = \{m/2^k : m, k \in \nn_0\}.
    \]
    Its multiplicative units are the powers $2^j$ with $j \in \zz$, and each nonzero element has a unique expression
    \[
        2^j\prod_{\substack{p\text{ odd}\\p\text{ prime}}}p^{v_p},
    \]
    where $j \in \zz$, each $v_p \in \nn_0$, and only finitely many exponents $v_p$ are nonzero. Consequently, the multiplicative atoms are the unit multiples of odd primes, and $S^*$ is factorial. In particular, it is quasi-$n$-factorial for every integer $n \ge 2$. Nevertheless, $S \ne \nn_0$. The additive monoid has no atoms, since every $s \in S^*$ satisfies
    \[
        s = s/2 + s/2.
    \]
    Thus, this example does not satisfy the additive atomicity required in Question~\ref{question:positive-quasi}. \hfill $\blacksquare$
\end{example}

We finish the paper by classifying the rational and transcendental monogenic positive semirings whose multiplicative monoids are quasi-$n$-factorial for a prescribed integer $n \ge 2$. This will answer Question~\ref{question:positive-quasi} affirmatively for these semirings under the weaker assumption that the additive monoid has an atom.

\begin{prop}\label{prop:monogenic-quasi}
    Fix an integer $n \ge 2$, and let $\alpha \in \rr_{> 0}$ be rational or transcendental. Put $S := \nn_0[\alpha]$. Then $S^*$ is quasi-$n$-factorial if and only if $\alpha \in \nn$ or $\alpha^{-1} \in \nn$. In either case, $S^*$ is factorial.
\end{prop}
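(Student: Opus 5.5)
We split into the rational and the transcendental case. In both, the easy direction comes first. If $\alpha \in \nn$ then $S = \nn_0$, and $S^* = \nn$ is factorial by the fundamental theorem of arithmetic. If $\alpha = 1/m$ with $m \in \nn$ then $S = \nn_0[1/m] = \{k/m^j : k, j \in \nn_0\}$. Exactly as in Example~\ref{example:dyadic-semiring}, the units of $S^*$ are the products of integer powers of the primes dividing $m$. Each nonzero element is a unit times a unique product of primes not dividing $m$, so $S^*$ is factorial. Factoriality implies quasi-$n$-factoriality, which settles the reverse implication and the final sentence. For the forward implication we recall that quasi-$n$-factoriality implies quasi-$2$-factoriality, by the padding argument at the start of Section~\ref{sec:quasi-factoriality}. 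Hence it suffices to exhibit, in every remaining case, an element with two distinct factorizations of length two. We must also allow the possibility that $S^*$ is not atomic; in that case it is not quasi-$n$-factorial by definition.

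\emph{Transcendental case.} Here $S$ is isomorphic to $\nn_0[x]$, and $S^*$ is the monoid of nonzero polynomials with nonnegative integer coefficients. Its only unit is $1$. I will use the identity
\[
(1 + x)(1 + x^2) = (1 + x + x^2 + x^3) \cdot 1,
\]
but this is not of the right shape. The better choice is the classical relation $(1 + x^3) = (1 + x)(1 - x + x^2)$, and that needs a negative coefficient. Instead I will use
\[
(1 + x + x^2)(1 + x^3) = (1 + x)(1 + x^2 + x^4),
\]
already used in Lemma~\ref{lem:polynomial}. I must check that the four factors are atoms of $\nn_0[x]$, via their factorizations in $\zz[x]$:
\begin{itemize}[label={}]
\item $1 + x$ and $1 + x + x^2$ are irreducible over $\zz$;
\item $1 + x^3 = (1 + x)(1 - x + x^2)$, and $1 - x + x^2$ has a negative coefficient, so $1 + x^3$ is an atom of $\nn_0[x]$;
\item $1 + x^2 + x^4 = (1 + x + x^2)(1 - x + x^2)$, which again forces it to be an atom.
\end{itemize}
The two factorizations are distinct because $1 + x$ does not occur on the left. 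If atomicity of $\nn_0[x]^*$ needs to be recorded, it follows from degree and coefficient-sum bounds.

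\emph{Rational case.} Write $\alpha = a/b$ in lowest terms with $a, b \ge 2$; the two excluded possibilities are exactly $b = 1$ and $a = 1$. Two subcases arise. If $a > b$, I expect $S^*$ to be atomic, and I would look for two length-two factorizations. Natural candidates are $a \cdot a$ and products involving $b\alpha = a$, for instance $\alpha \cdot (b^2\alpha) = a^2$. Here I would verify that $\alpha$ and $a$ (or suitable elements such as $a^k/b^j$) are atoms. The atomicity and atom verification, using the structure of $\nn_0[a/b]$ (denominator $b^j$ forces numerators large enough), is the main obstacle. I would try the identity $a \cdot a = (b\alpha)(b\alpha)$ combined with $\alpha^2 \cdot b^2 = a^2$, choosing atoms among $\alpha$, $b$, and $a$ after checking which are irreducible, and fall back on the results of~\cite{DGZ26} on rational monogenic semirings if needed. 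If $a < b$, I would show that $S^*$ is not atomic, or not quasi-$2$-factorial. A natural approach is that $1 = b\alpha - (a-1)\cdots$ fails, but elements such as $\alpha$ might factor infinitely, e.g.\ $a = \alpha \cdot b$ repeated. Again I expect to verify this via the description of $\nn_0[a/b]$ from~\cite{DGZ26}.
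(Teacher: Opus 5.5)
Your reverse implication and your transcendental case are correct and match the paper. The paper uses the same identity $(1+x+x^2)(1+x^3)=(1+x)(1+x^2+x^4)$. It checks atomhood in $\nn_0[x]$ by noting that each $f_i(1)$ is prime, while you use unique factorization in $\zz[x]$ plus a sign argument; both work.

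The gap is the rational case $\alpha=a/b$ with $a,b\ge 2$ coprime. You leave it as a plan, and the plan fails as stated. Here $S^\times=\{1\}$, so $a=b\cdot\alpha$ and $b^2\alpha=b\cdot(b\alpha)$ are products of two nonunits and are not atoms. Hence $a\cdot a=(b\alpha)(b\alpha)$ and $\alpha\cdot(b^2\alpha)=a^2$ are not factorizations of length two. Likewise $\alpha^2b^2$ is a product of four nonunits. None of your candidates produces two distinct length-two factorizations. The split into $a>b$ versus $a<b$, and the non-atomicity you hope for when $a<b$, are neither justified nor needed.

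The missing idea is that the identity you already have transfers verbatim to $\nn_0[a/b]$. By \cite[Corollary 3.8]{DGZ26}, when $a,b\ge 2$, a polynomial in $\nn_0[x]$ with coefficients in $\{0,1\}$ is the only polynomial in $\nn_0[x]$ taking its value at $\alpha$. The polynomials $1$, $1+x$, $1+x+x^2$, $1+x^3$, and $1+x^2+x^4$ all have coefficients in $\{0,1\}$, so this gives three facts:
\begin{enumerate}
\item $S^\times=\{1\}$, since $g(\alpha)h(\alpha)=1$ forces $gh=1$;
\item each $f_i(\alpha)$ is an atom, since any equality $f_i(\alpha)=g(\alpha)h(\alpha)$ lifts to $gh=f_i$ in $\nn_0[x]$;
\item the four values are pairwise distinct.
\end{enumerate}
Evaluating the identity at $\alpha$ then yields two distinct length-two factorizations in $S^*$. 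There is no case split and no need to decide whether $S^*$ is atomic.

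The uniqueness is also elementary. Suppose $f(\alpha)=g(\alpha)$, where $g$ has coefficients in $\{0,1\}$ and $f\ne g$. Gauss's lemma gives $f-g=(bx-a)h$ with $h=\sum_{i=k}^m h_ix^i\in\zz[x]$ and $h_k\ne 0\ne h_m$. Every coefficient of $f-g$ is at least $-1$.
\begin{itemize}
\item The coefficient $-ah_k$ of $x^k$ forces $h_k\le -1$, because $a\ge 2$.
\item The coefficient $bh_m$ of $x^{m+1}$ forces $h_m\ge 1$, because $b\ge 2$.
\item At the least $j>k$ with $h_j\ge 0$, the coefficient $bh_{j-1}-ah_j$ is at most $-b\le -2$, a contradiction.
\end{itemize}
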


\begin{proof}
    We first show that transcendental generators and nonintegral rational generators with numerator greater than one yield two distinct multiplicative factorizations of length two. Consider the following polynomials:
    \[
        f_1 := 1 + x + x^2, \qquad f_2 := 1 + x^3, \qquad f_3 := 1 + x, \qquad f_4 := 1 + x^2 + x^4.
    \]
    Each is an atom of the multiplicative monoid $\nn_0[x] \setminus \{0\}$. Indeed, if $f_i = gh$ with $g, h \in \nn_0[x]$ then $g(1)h(1) = f_i(1)$ is an ordinary prime. One factor, say $g$, satisfies $g(1) = 1$ and is therefore a monomial with coefficient one. Since $f_i(0) = 1$, this monomial must be $1$. The identity
    \begin{equation}\label{eq:quasi-polynomial}
        f_1f_2 = f_3f_4
    \end{equation}
    gives two distinct factorizations of length two in $\nn_0[x] \setminus \{0\}$, whose only unit is $1$. This identity and its failure of unique factorization appear in~\cite[Example 6.1]{GP23}.

    If $\alpha$ is transcendental then evaluation at $\alpha$ is a semiring isomorphism from $\nn_0[x]$ to $S$. Thus,~\eqref{eq:quasi-polynomial} gives two distinct factorizations of length two in $S^*$.

    Suppose next that $\alpha = a/b$ in lowest terms, where $a, b \ge 2$. By~\cite[Corollary 3.8]{DGZ26}, if $f \in \nn_0[x]$ has all coefficients in $\{0, 1\}$ then $f$ is the unique polynomial in $\nn_0[x]$ representing $f(\alpha)$. Applying this to $f = 1$ shows that $S^\times = \{1\}$: if $g(\alpha)h(\alpha) = 1$ with $g, h \in \nn_0[x]$ then $gh = 1$, so $g = h = 1$. Each $f_i(\alpha)$ is an atom, because an equality $f_i(\alpha) = g(\alpha)h(\alpha)$ forces $f_i = gh$, and the atomhood of $f_i$ forces one factor to be $1$. The same uniqueness shows that the four values $f_i(\alpha)$ are pairwise distinct. Hence evaluation of~\eqref{eq:quasi-polynomial} again gives two distinct factorizations of length two in $S^*$.

    In both cases, adjoining $n - 2$ copies of the atom $f_3(\alpha)$ to each factorization yields two distinct factorizations of length $n$. Thus, quasi-$n$-factoriality forces $\alpha \in \nn$ or $\alpha^{-1} \in \nn$.

    It remains to prove factoriality in these two cases. If $\alpha \in \nn$ then $S = \nn_0$. If $\alpha = 1/d$ for an integer $d \ge 2$ then a positive rational belongs to $S$ precisely when every prime divisor of its reduced denominator divides $d$. Its multiplicative units are the positive rationals whose reduced numerators and denominators have no prime divisors outside those dividing $d$. Hence every nonzero element of $S$ has a unique expression as a unit times a product of ordinary primes not dividing $d$, up to the order of those primes. Therefore $S^*$ is factorial and, in particular, quasi-$n$-factorial.
\end{proof}

As a consequence, if $\alpha \in \rr_{> 0}$ is rational or transcendental, $S := \nn_0[\alpha]$, and $S^*$ is quasi-$n$-factorial for some integer $n \ge 2$ then the existence of an atom in $(S, +)$ forces $S = \nn_0$. Indeed, if $\alpha = 1/d$ for an integer $d \ge 2$ then every $s \in S^*$ is the sum of $d$ copies of $s/d \in S^*$, so $(S, +)$ has no atoms. Thus, Question~\ref{question:positive-quasi} has an affirmative answer for rational and transcendental monogenic positive semirings.

The polynomial obstruction in the proof also holds over a broader class of coefficient semidomains. Let $T$ be a semidomain that is not an integral domain. By~\cite[Lemma 6.2]{GP23}, the four polynomials used above remain atoms of $T[x]^*$. They are pairwise nonassociate, since the units of $T[x]$ are constant and all four constant terms are $1$. For each integer $n \ge 2$, adjoining $n - 2$ copies of the same atom to both factorizations in~\eqref{eq:quasi-polynomial} shows that $T[x]^*$ is not quasi-$n$-factorial. Let $K$ be the fraction field of $T - T$. If $\alpha$ lies in a field extension of $K$ and is transcendental over $K$ then evaluation at $\alpha$ gives an isomorphism $T[x] \cong T[\alpha]$, so the same obstruction applies to $T[\alpha]^*$.

%%%%%%%%%%%%%%%%%%%%%%%%%%
%%%%%%%%%%%%%%%%%%%%%%%%%%
\bigskip
\section*{Acknowledgments}

During the period of this collaboration, the authors were part of CrowdMath, a year-long math research program hosted by the MIT Mathematics department and the Art of Problem Solving. The authors thank the program for providing this rewarding research opportunity.

%%%%%%%%%%%%%%%%%%%%%%%%%%
%%%%%%%%%%%%%%%%%%%%%%%%%%

\end{document}